\begin{document}

\title{Quantum $K$-Rings of Partial Flag Varieties, Coulomb Branches, and the Bethe Ansatz}
\author{Irit Huq-Kuruvilla}
\address{Department of Mathematics\\
Virginia Tech\\
Blacksburg, VA 24060}
\email{irithk@vt.edu}
\date{\today}

\theoremstyle{definition}
\newtheorem{mydef}{Definition}[section]
\newtheorem{ex}{Example}[section]
\theoremstyle{plain}
\newtheorem{thm}{Theorem}[section]

\newtheorem*{thm*}{Theorem}
\newtheorem{lem}[thm]{Lemma}
\newtheorem{conj}{Conjecture}[section]
\newtheorem{prop}[thm]{Proposition}
\newtheorem{cor}[thm]{Corollary}
\newtheorem{exercise}[thm]{Exercise}
\newtheorem{remark}[thm]{Remark}
\begin{abstract}
We give a purely geometric explanation of the coincidence between the Coulomb Branch equations for the 3D GLSM describing the quantum $K$-theory of a flag variety, and the Bethe Ansatz equations of the 5-vertex lattice model. In doing so, we prove two explicit presentations for the quantum $K$-ring of the flag variety, resolving  conjectures of Gu-Sharpe-Mihalcea-Xu-Zhang-Zou and Rimanyi-Tarasov-Varchenko. We also prove that the stable map and PSZ quasimap $K$-theory of the partial flag varieties are isomorphic, using the work of Koroteev-Pushkar-Smirnov-Zeitlin identifying the latter ring with the Bethe algebra of the 5-vertex lattice model. Our isomorphism gives a more explicit description of the quantum tautological bundles described in the PSZ quasimap ring.

\end{abstract}
\maketitle
\section{Introduction}
The quantum $K$-ring of a smooth projective variety $X$, introduced by Givental and Lee, is a deformation of the usual $K$-ring of $X$ using $K$-theoretic Gromov-Witten of invariants of $X$. It is a generalization to $K$-theory of the quantum cohomology ring.

While there are many occurrences of the quantum $K$-ring in physics, integrable systems, and reprentation theory, actually fully describing the ring is generally much more difficult than the quantum cohomology ring, so fewer computations have been done. 

In particular, there is no proven description of the ring for $X=Fl(v_1,\dots,v_k;N)$, the partial flag varieties. $Fl(v_1,\dots,v_k;N)$ is the moduli space of flags of subspaces $V_1\subset V_2\subset \dots V_k\subset \mathbb{C}^N$, with $dim(V_i)=v_i$. 

The subspace $V_i$ induces a tautological bundle $\mathcal{S}_i$ over $X$, we use the convention that $\mathcal{S}_{i+1}=\mathbb{C}^N$. The standard torus action on $\mathbb{C}^N$ induces an action of the flag variety. Let $\mathcal{R}_i$ denote the tautological quotient bundle $\mathcal{S}_{i+1}/\mathcal{S}_i$. 

The exterior powers of $\mathcal{S}_i$ generate the ($T$-equivariant or not) $K$-ring of the variety, with a set of relations determined entirely by the Whitney sum formula:

\begin{equation*}
\Lambda_y(\mathcal{S}_{i+1})=\Lambda_y(\mathcal{S}_i)\Lambda_y(\mathcal{R}_i)
\end{equation*}

There are two sets of predictions for $QK(Fl)$, with remarkable interplay between them. One, due to Gu-Mihalcea-Sharpe-Xu-Zhang-Zhou in \cite{conjpf} (first introduced in \cite{conj} for Grassmanians), which is related to the OPE ring of a certain 3D gauged linear sigma model, and gives the following conjectural description of the relations of the $T$-equivariant quantum $K$-ring of the flag, henceforth referred to as the \emph{Whitney presentation}:
\begin{conj}
    \label{whitconj}

\begin{equation}
\Lambda_y(\mathcal{S}_i) * \Lambda_y( \mathcal{R}_i)
\: = \:
\Lambda_y( \mathcal{S}_{i+1}) \: - \:
y^{v_{i+1}-v_i} \frac{Q_i}{1-Q_i} \det(\mathcal{R}_i) *
\left( \Lambda_y(\mathcal{S}_i) - \Lambda_y(\mathcal{S}_{i-1}) \right).
\end{equation}
\end{conj}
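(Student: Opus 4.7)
The strategy is to identify $QK_T(Fl)$ with the Bethe algebra of the 5-vertex lattice model and verify the Whitney relations directly in this algebra. The key inputs are the paper's own comparison theorem identifying stable map $QK(Fl)$ with the PSZ quasimap $QK(Fl)$, together with the KPSZ description of the PSZ ring as a Bethe algebra. Once these are in hand, the Whitney conjecture becomes a symmetric-function identity, checkable modulo the Bethe ansatz equations.

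First, I would use the comparison and KPSZ identifications to realize $\Lambda_y(\mathcal{S}_i)$ as $\prod_{j=1}^{v_i}(1+y\sigma_j^{(i)})$, where the $\sigma_j^{(i)}$ are Bethe roots at level $i$, subject to the Bethe ansatz equations of the 5-vertex model. The quotient bundle $\mathcal{R}_i$ becomes an elementary symmetric expression in the complementary roots at each level. Writing both sides of the Whitney relation in this parametrization reduces the conjecture to a polynomial identity in the $\sigma_j^{(i)}$ modulo the Bethe equations.

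Next, I would carry out this identity. The 5-vertex Bethe equations have a characteristic rational form in $Q_i$ whose poles produce exactly the factor $Q_i/(1-Q_i)$ on the right-hand side, while the coupling between adjacent levels $i-1, i, i+1$ in the 5-vertex model explains the appearance of $\Lambda_y(\mathcal{S}_{i-1})$ in the correction term. The heart of the argument is a level-by-level matching of the Bethe equations with the Whitney relation, with the $y^{v_{i+1}-v_i}\det(\mathcal{R}_i)$ prefactor arising naturally from the product over gap roots.

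Finally, I would establish completeness: that the Whitney presentation gives all of $QK_T(Fl)$ rather than a proper quotient. Since the classical limit $Q_i\to 0$ reduces the Whitney relation to the usual Whitney sum formula $\Lambda_y(\mathcal{S}_i)\Lambda_y(\mathcal{R}_i)=\Lambda_y(\mathcal{S}_{i+1})$, which presents $K_T(Fl)$, a flatness/rank argument over $K_T(\mathrm{pt})[[Q_1,\dots,Q_k]]$ yields the isomorphism. The main obstacle will be the algebraic matching in the second step: the explicit identification of the 5-vertex Bethe equations with the quantum correction term, which requires careful manipulation of rational expressions in the Bethe roots and the quantum parameters $Q_i$, and tracking how the nearest-neighbor structure of the 5-vertex model produces precisely the $\Lambda_y(\mathcal{S}_i)-\Lambda_y(\mathcal{S}_{i-1})$ combination on the right-hand side.
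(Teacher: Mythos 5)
There is a genuine gap: your first step assumes exactly what is not available. The identification of the stable-map ring $QK_T(Fl)$ with the PSZ quasimap ring (and hence with the Bethe algebra of the 5-vertex model) is Conjecture \ref{iso} of the paper, which before this work was known only for full flags $SL(N)/B$. In the paper's logical structure that isomorphism is a \emph{consequence} of first establishing the Bethe-Ansatz-type relations inside $QK(Fl)$: the map $\phi_Q(\tau(P^i_j))\mapsto\widehat{\tau}$ is only well defined as a ring homomorphism once one knows that the relations obtained from Vieta's formulas applied to $F_i(t)$ already hold in $QK(Fl)$, and that knowledge comes from the abelian/non-abelian correspondence, the explicit twisted $J$-function of the abelianization $Y$, and the quantum triviality theorem. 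Taking the comparison theorem as an input to prove the Whitney relations therefore runs the argument in a circle.

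A second, related problem is that even granting an abstract isomorphism with the Bethe algebra, your parametrization $\Lambda_y(\mathcal{S}_i)\mapsto\prod_j(1+y\sigma_j^{(i)})$ is itself a nontrivial claim. The KPSZ description is in terms of quantum tautological bundles $\widehat{\tau}$, which are $Q$-deformations of the naive classes whose explicit form the paper describes as difficult to compute; knowing that the geometric class $\Lambda_y(\mathcal{S}_i)\in QK(Fl)$ corresponds to the undeformed symmetric function of Bethe roots (rather than some $O(Q)$ correction of it) is precisely the content one must prove, and the paper's closing remark warns that its own argument only pins the isomorphism down up to an automorphism of the form $I+o(Q)$. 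Your symmetrization step (Vieta's formulas modulo the Bethe equations producing the $Q_i/(1-Q_i)$ factor and the $\Lambda_y(\mathcal{S}_i)-\Lambda_y(\mathcal{S}_{i-1})$ combination) and your final Nakayama-type completeness argument do match the paper's actual computations; what is missing is a non-circular justification that the Bethe equations hold as relations among the actual tautological classes in $QK(Fl)$, which the paper supplies by realizing them as $q\to 1$ limits of $q$-difference relations in the twisted quantum $K$-ring of the abelianization.
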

 The special cases of Grassmanians and incidence varieties by a subset of the same authors have been addressed in \cite{conj} and \cite{qwhit}.

This conjecture was obtained via symmetrizing the Coulomb branch equations for the GLSM, which are given by the critical locus of a superpotential $\mathcal{W}$, after choosing a particular set of Chern-Simons levels. 

 There is one equation for each pair $(i,j)$ with $P^i_{j}$ being the $j$th Chern root of $\mathcal{S}_i$:

\begin{equation*}
(-1)^{v_i-1}\prod_k P^i_k\prod_{b=1}^{v_{n+1}}\prod_k(1-\frac{P^i_j}{P^{i+1}_b})=(P^i_j)^{v_i}Q_{ij}\prod_{a=1}^{v_{n-1}}(1-\frac{P^{i-1}_a}{P^{i}_j})=0
\end{equation*}

The other prediction comes from integrable systems. A general conjecture due to  Rimanyi-Tarasov-Varchenko in \cite{rimanyi2015trigonometric} conjectured that the quantum $K$-theory of the Nakajima quiver variety $T^*Fl$ was isomorphic to the Bethe algebra of the Yang-Baxter algebra associated to a particular quantum group, the Yangian. The same Yang-Baxter algebra also arises using from Bethe Ansatz method to construct solutions to the Yang-Baxter equation for the asymmetric 6-vertex lattice model, a quantum integrable system. The \say{compact limit} of this algebra was predicted to describe quantum $K$-theory of the flags themselves, and corresponds to the Bethe algebra of the 5-vertex model, a more degenerate integrable system. Based on this isomorphism, Rimanyi-Tarasov-Varchenko in Conjecture 13.17 of \cite{rimanyi2015trigonometric} give the following prediction for the quantum $K$-ring of the partial flag:

\begin{conj}
\label{rtva}
The quantum $K$-ring has a presentation given by the determinant of a discrete Wronskian matrix $W_Q$. 
\end{conj}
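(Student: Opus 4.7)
The plan is to deduce the Wronskian presentation by identifying $QK(Fl)$ with the Bethe algebra of the 5-vertex lattice model, in which a discrete Wronskian presentation is classical from the theory of Baxter $Q$-operators. The chain of identifications is: stable map $QK(Fl) \cong \text{PSZ quasimap }K(Fl) \cong \text{Bethe algebra of the 5-vertex model}$, where the second isomorphism is the theorem of Koroteev-Pushkar-Smirnov-Zeitlin. Once this chain is in place, Conjecture \ref{rtva} reduces to a known statement about the Bethe algebra.

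First, I would prove the Whitney presentation of Conjecture \ref{whitconj} directly on the stable map side. This is the geometric core: one must verify that the relations obtained by symmetrizing the critical-locus equations of the GLSM superpotential $\mathcal{W}$ hold in $QK(Fl)$. A natural approach is via the $K$-theoretic $J$-function of the flag, using an abelian/non-abelian correspondence to pass from a product of projective bundles to $Fl$ itself, or via equivariant localization on the moduli of stable maps. Second, I would identify the symmetrized Coulomb branch equations with the Bethe Ansatz equations of the 5-vertex model. This is a purely algebraic manipulation: the Weyl group of the gauge group $GL(v_i)$ permutes the Chern roots $P^i_j$, and its invariants are exactly the symmetric functions that appear as Chern classes of $\mathcal{S}_i$. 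A rank count using the known rank of $K(Fl)$ over the base ring then shows that the stable map QK-ring realizes the full Bethe algebra and not a proper subalgebra, closing the chain of isomorphisms.

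Third, I would translate the Bethe algebra description into the Wronskian form. In the 5-vertex Bethe algebra, the Baxter $Q$-polynomials $Q_i(u)$ whose roots are the Bethe roots at level $i$ satisfy discrete QQ-relations which are equivalent to the vanishing of entries of a discrete Wronskian matrix $W_Q$ built from the $Q_i$. Expressing those entries in terms of elementary symmetric functions in the Chern roots of $\mathcal{S}_i$ and reading off the polynomial relations among the coefficients produces the Wronskian presentation claimed in Conjecture \ref{rtva}. The quantum tautological bundles of the PSZ construction should then become explicit polynomials in the Chern roots, giving the refinement promised in the abstract.

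The main obstacle is the first step: proving the Whitney presentation geometrically. The Coulomb branch formulation involves a specific choice of Chern-Simons levels, and the critical-locus equations must be shown to hold as genuine relations in quantum $K$-theory, not merely as symbolic identities among Chern roots. I expect this to require either a careful computation of the $J$-function combined with abelianization, or a new geometric argument tailored to stable maps of higher degree that tracks the contribution of the nontrivial levels. Once Whitney is established, the symmetrization-to-Bethe step and the passage to the Wronskian presentation are essentially formal.
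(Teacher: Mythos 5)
Your overall architecture front-loads the right hard step (establishing the Whitney-type relations via the $J$-function and abelianization, which is indeed how the paper proceeds), but the route you take from there to the Wronskian presentation has two genuine problems. First, you propose to deduce the presentation by passing through the chain $QK(Fl)\cong QK^{QM}(Fl)\cong$ Bethe algebra and then invoking QQ-relations for Baxter $Q$-polynomials. But $QK(Fl)\cong QK^{QM}(Fl)$ is itself Conjecture \ref{iso}, and its proof runs through the Nakayama lemma applied to a homomorphism built from the very relations you are trying to establish; using it as an input to the Wronskian presentation is circular. The paper never routes the Wronskian presentation through the Bethe algebra at all: it proves a strengthened Whitney relation $\Lambda_y(\mathcal{S}_i)*\Lambda_y(\widehat{\mathcal{R}}_i)=\Lambda_y(\mathcal{S}_{i+1})+Q_i\det(\widehat{\mathcal{R}}_i)*\Lambda_y(\mathcal{S}_{i-1})$ (equation \eqref{gamer}) and then verifies $\det(M_j)=\Lambda_y(\mathcal{S}_j)$ by a two-term induction, expanding the determinant along the bottom row; the recursion produced by that expansion is literally the strengthened Whitney relation, and completeness follows from the Nakayama corollary. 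This is elementary once \eqref{gamer} is in hand, and requires no facts about transfer matrices or Baxter operators.

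Second, and more seriously, your final step assumes the $\gamma^i_j$ in $W_Q$ are Chern roots of the classical quotient bundles $\mathcal{R}_i=\mathcal{S}_{i+1}/\mathcal{S}_i$. They are not: the conjecture only holds after quantum corrections, because the $\gamma^i_j$ must be identified with the extra roots $\bar{P}^i$ of the Bethe polynomial $F_i(t)$, i.e.\ with Chern roots of the quantum quotient bundles $\widehat{\mathcal{R}}_i$, whose top exterior power differs from $\det\mathcal{R}_i$ by a factor $(1-Q_i)^{-1}$. Without pinning down this identification the entries of $W_Q$ are not determined as classes in $QK(Fl)$ and the ``essentially formal'' passage does not go through. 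Relatedly, your symmetrization step needs more than the relations holding symbolically among Chern roots: one must know which quantum products (e.g.\ $e_{\ell-j}(P^{i+1})h_j(P^i)$ and $e_\ell(\bar P^i)$) coincide with their classical counterparts, and that is exactly where the refined $q$-degree estimates on the abelianized $J$-function enter. Those estimates, together with the interpretation of the Coulomb branch equations as relations in $QK^{tw}(Y)$, are the genuinely new content; they cannot be absorbed into ``formal'' manipulation.
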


Rimanyi-Tarasov-Varchenko's question was partially answered by Koroteev-Pushkar-Smirnov-Zeitlin in \cite{manybody}. Those authors defined a variant of quantum $K$-theory based on the quasimap moduli space (here denoted $QK^{QM}$), rather than the stable map moduli space, that realized the isomorphism to the Bethe algebra. We refer to their theory as the PSZ quasimap theory to disambiguate from the work of Ruan-Zhang and Tseng-You. They identified the spectra of operators of multiplication by certain classes in $QK^{QM}(T^*Fl)$ with solutions to the Bethe Ansatz equations of the Yangian. 

 For cotangent bundles, based on the work of \cite{Liu} and \cite{Xiaohan}, the quasimap rings are not expected to coincide with the stable map rings due to differences between the corresponding generating functions.

However, in the compact limit, those functions coincide making it reasonable to expect the following: 

\begin{conj}
    \label{iso}
    $QK(Fl)\cong QK^{QM}(Fl)$
\end{conj}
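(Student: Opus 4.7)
The plan is to exhibit both rings as the same explicit quotient of a symmetric polynomial ring by the same ideal of relations, so that a matching of generators gives the isomorphism. Concretely, I would realize $QK(Fl)$ via the Whitney presentation and $QK^{QM}(Fl)$ via the Bethe algebra description of KPSZ, and then show the two ideals of relations coincide.

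On the stable map side, I would invoke the Whitney presentation of $QK(Fl)$ (the earlier main theorem of the paper, resolving Conjecture \ref{whitconj}). After writing $\Lambda_y(\mathcal{S}_i)$ and $\Lambda_y(\mathcal{R}_i)$ in terms of the Chern roots $P^i_j$, the Whitney relations are equivalent to the symmetrizations of the Coulomb branch equations for the superpotential $\mathcal{W}$ recorded in the excerpt. This presents $QK(Fl)$ as the quotient of a symmetric polynomial ring in the $P^i_j$ over $\mathbb{C}[Q_i]_{loc}$ by the ideal generated by these symmetrized critical-point equations.

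On the quasimap side, I would apply KPSZ's theorem identifying $QK^{QM}(T^*Fl)$ with the Bethe algebra of the Yangian Yang-Baxter algebra (asymmetric 6-vertex model) and take the compact limit degenerating the 6-vertex model to the 5-vertex model. This presents $QK^{QM}(Fl)$ as the Bethe algebra of the 5-vertex model, equivalently as the quotient of the same symmetric polynomial ring by the ideal cut out by the 5-vertex Bethe Ansatz equations. The discrete Wronskian reformulation of Conjecture \ref{rtva} is a compact repackaging of these equations and is convenient for the comparison.

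With both rings so presented, the proof of Conjecture \ref{iso} reduces to the direct comparison: the symmetrized Coulomb branch equations agree term by term with the 5-vertex Bethe Ansatz equations under the identification of $P^i_j$ with the Bethe roots and of $Q_i$ with the anisotropy parameters. This is precisely the GLSM-integrable system coincidence the paper sets out to explain geometrically. Once verified, the identity map of the common symmetric polynomial ring descends to an isomorphism $QK(Fl)\xrightarrow{\sim} QK^{QM}(Fl)$, and the tautological generators $\Lambda_y(\mathcal{S}_i)$ on the stable map side map to the symmetric functions of the $P^i_j$, which are KPSZ's quantum tautological bundles — yielding the promised explicit description. The main obstacle is not the final algebraic matching, which is essentially bookkeeping, but rather the two prerequisite structure theorems: proving the Whitney presentation in $QK(Fl)$ (enough $K$-theoretic Gromov-Witten computations plus a dimension/flatness argument to rule out extra relations), and making precise that the compact limit of KPSZ's Bethe algebra construction truly presents $QK^{QM}(Fl)$ rather than only a quotient or specialization of it.
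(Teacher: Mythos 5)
Your overall architecture --- Whitney presentation on the stable-map side, Bethe algebra on the quasimap side, then match the defining equations --- is close in spirit to the paper, but it founders on exactly the point you flag at the end as ``the main obstacle'': you need $QK^{QM}(Fl)$ to \emph{be presented} as the quotient of the symmetric polynomial ring in the $P^i_j$ by the ideal generated by the Bethe Ansatz equations. That is not what Koroteev--Pushkar--Smirnov--Zeitlin prove. Their description is via fixed-point localization, not generators and relations: the restriction of a quantum tautological class $\widehat{\tau}$ to a $T$-fixed point is the evaluation of $\tau$ at a solution of the Bethe equations. Passing from ``the Bethe equations hold at every fixed point'' to ``the Bethe equations generate the full ideal of relations'' is a completeness statement that is not available off the shelf, so your term-by-term comparison of ideals does not by itself produce an isomorphism of the two rings.

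The paper circumvents this. It uses the localization description only to check that the relations obtained from Vieta's formulas applied to $F_i(t)$ continue to hold in the quasimap ring after replacing $\tau$ by $\widehat{\tau}$; this yields a ring homomorphism $QK(Fl)\to QK^{QM}(Fl)$ given by $\phi_Q(\tau(P^i_j))\mapsto\widehat{\tau}$, with no claim about completeness of the Bethe relations. Since $QK^{QM}(Fl)$ is known to be a deformation of the product on $K_T^*(Fl)[[Q]]$, hence a free module of the correct rank, and the map reduces to the identity on $K_T^*(Fl)$ modulo $Q$, the $K$-theoretic Nakayama lemma of Gu--Mihalcea--Sharpe--Zhang--Zou forces it to be an isomorphism. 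To repair your argument, replace the ideal comparison with this homomorphism-plus-Nakayama step; note also the paper's caveat that the resulting isomorphism is a priori only abstract and could differ from the identity on $K_T^*(Fl)[[Q]]$ by an automorphism of the form $I+o(Q)$.
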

This prediction was verified in \cite{manybody} for the specific case of full flag $SL(N)/B$, where the authors used it to prove Conjecture \ref{rtva} in that case. 

The explicit description of $QK^{QM}(Fl)$ in \cite{manybody} is as follows:

\begin{thm*}[Koroteev-Pushkar-Smirnov-Zeitlin]
    Let $\tau$ be a function of the Chern roots $P^i_j$ of $\mathcal{S}_i$, invariant under the action of $\prod_i S_{v_i}$. The authors define $\widehat{\tau}$ a certain deformation of the $K$-theory class $\tau(P^i_j)$. Their theorem shows that the eigenvalues of $\widehat{\tau}$ against the class of a $T$-fixed point are given by the Bethe Ansatz equations, which are:

\begin{equation}
(-1)^{v_i-1}\prod_k P^i_k\prod_{b=1}^{v_{n+1}}\prod_k(1-\frac{P^i_j}{P^{i+1}_b})=(P^i_j)^{v_i}Q_{ij}\prod_{a=1}^{v_{n-1}}(1-\frac{P^{i-1}_a}{P^{i}_j})=0
\end{equation}

We note that the $Q$-deformation used to define $\widehat{\tau}$ are difficult to compute in practice, and the interpretation remains somewhat mysterious. 
\end{thm*}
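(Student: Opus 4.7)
The plan is to combine equivariant localization on the PSZ quasimap moduli space with the algebraic Bethe ansatz formalism coming from the Yangian action on equivariant $K$-theory of Nakajima quiver varieties. The essential input is that $K_T(T^*Fl)$ carries a natural action of $Y(\widehat{\mathfrak{gl}}_n)$ via geometric $R$-matrices built from stable envelopes, and the Bethe subalgebra of this Yangian is generated by the coefficients of a transfer matrix (or Baxter $Q$-operator), whose joint eigenvalues are classically parameterized by solutions of the Bethe ansatz equations written in the theorem statement.

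First, I would set up the quantum tautological class $\widehat{\tau}$ concretely: insert $\tau$ at a nonsingular marked point of $\mathbb{P}^1$, integrate against the virtual structure sheaf of the PSZ quasimap moduli space with a point of relative position at $\infty$, and sum over degrees with weights $Q^d$. This produces a one-parameter deformation of the operator of classical multiplication by $\tau$ on $K_T(T^*Fl)$, reducing to the latter at $Q=0$. Using $T$-equivariant localization on the quasimap space, the problem of computing eigenvalues of $\widehat{\tau}$ on the basis of $T$-fixed point classes reduces to the evaluation of $\tau$ at an explicit collection of parameters $\{P^i_j\}$ associated to each fixed data.

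Second, I would identify these evaluation points with Bethe roots. The key technical ingredient is that the vertex function of $T^*Fl$ satisfies a system of $q$-difference (quantum Knizhnik--Zamolodchikov) equations, and the saddle points of its quasiclassical expansion are precisely the Bethe roots of the $5$-vertex (or in the cotangent case, $6$-vertex) model. Equivalently, the GIT stability condition on quasimaps forces the parameters $P^i_j$ at a fixed point to satisfy the displayed polynomial equations, which are manifestly the Bethe ansatz equations associated to the $A_{k-1}$ quiver. Coupling this with the eigenvalue computation from step one yields the theorem.

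The hard part is the compatibility of the geometric quantum deformation with the algebraic Bethe subalgebra: one must show that the $\widehat{\tau}$ defined via virtual counts on the quasimap moduli space matches the transfer matrix operator coming from the $R$-matrix formalism. This is essentially the content of the Nekrasov--Shatashvili correspondence in this setting and is the main technical obstacle. A clean way to address it is to verify the commutation and comultiplication identities satisfied by $\widehat{\tau}$ using wall-crossing for quasimaps, and then invoke uniqueness of the Yangian action determined by these identities together with its classical limit at $Q=0$.
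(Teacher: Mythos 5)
This statement is not proved in the paper at all: it is quoted verbatim from Koroteev--Pushkar--Smirnov--Zeitlin (\cite{manybody}) as an external input, so there is no internal proof to compare your proposal against. Judged on its own terms, your sketch does follow the broad architecture of the original KPSZ argument (quantum tautological classes defined by relative quasimap counts, vertex functions, $q$-difference equations, identification of the resulting algebra with a Bethe subalgebra via the Baxter $Q$-operator), but it contains a concrete misstep. You assert that ``the GIT stability condition on quasimaps forces the parameters $P^i_j$ at a fixed point to satisfy the displayed polynomial equations.'' This is not how the Bethe roots enter. The $T$-fixed points of the quasimap space are indexed by fixed flags together with degree data, and stability imposes no polynomial constraint of Bethe type on the Chern roots. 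In KPSZ the Bethe equations arise analytically: the eigenvalues of the operators $\widehat{\tau}$ are extracted from the $q\to 1$ asymptotics of the vertex functions (equivalently, from the saddle-point/critical-point equations of the integral representation of the vertex function), and it is this asymptotic analysis --- not fixed-point combinatorics --- that produces the displayed equations. Collapsing that step into ``stability forces the equations'' removes the actual content of the theorem.

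Two smaller points. First, for the $K$-theoretic (trigonometric) setting the relevant quantum group is a quantum affine algebra rather than the Yangian; the Yangian governs the cohomological (rational) case, and the statement at hand concerns the compact ($5$-vertex) limit of the $6$-vertex picture, so the $R$-matrix formalism you invoke needs to be degenerated accordingly. Second, your final paragraph correctly identifies the matching of $\widehat{\tau}$ with transfer-matrix coefficients as the hard compatibility, but ``invoke uniqueness of the Yangian action'' is not available as stated --- in \cite{manybody} this matching is carried out by explicitly computing the generating function of quantum exterior powers (the Baxter operator) and verifying it satisfies the $TQ$-relation, which is a calculation rather than an abstract uniqueness argument. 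If you intend your sketch as a reconstruction of the cited proof, these are the places where it would need to be repaired; within the present paper the theorem is simply used as a black box in the comparison of $QK(Fl)$ with $QK^{QM}(Fl)$.
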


In a remarkable coincidence, the Bethe Ansatz equations are identical to the Coulomb branch equations. From a physical perspective, this coincidence can be thought of as the compact limit of the gauge/Bethe correspondence for $T^*Fl$, formulated by Nekrasov-Shatashvili in \cite{ns} for general Nakajima quiver varieties. 

We propose a purely geometric explanation of this coincidence by providing a direct interpretation of the Bethe Ansatz equations themselves in terms of quantum $K$-theory, and in doing so prove both predicted descriptions of $QK(Fl)$. To do this, we use the quantum $K$-theoretic abelian/non-abelian correspondence, formulated in general by the author in \cite{me}.
\begin{conj}
\label{abnonab}
For a GIT quotient $V//G$, under some mild conditions, there 
there is a surjection $\phi_Q: QK^{tw}(V//T_G)^W\to QK(V//G)$, where $T_G$ denotes the maximal torus of $G$, and $W$ is the Weyl group of $G$. The superscript $tw$ denotes a twisting, i.e. a modification of the virtual structure sheaf. We refer to $V//T_G$ as the \emph{abelianization} of $V//G$.
\end{conj}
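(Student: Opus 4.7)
The plan is to construct $\phi_Q$ via a comparison of quasimap $I$-functions for the two GIT quotients, then promote this to an identification of quantum products. At the classical level the map already exists: Martin's theorem gives a surjection $K(V//T_G)^W \to K(V//G)$ whose kernel is generated by the Euler class of the bundle associated to the roots of $G$. The twisting $tw$ on $QK(V//T_G)$ should be chosen as the $K$-theoretic Euler class of this root bundle, viewed as a modification of the virtual structure sheaf on the moduli of quasimaps to $V//T_G$, so that at $Q=0$ the twisted theory recovers the quotient appearing in Martin's theorem.

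The first technical step is to set up the $K$-theoretic (quasimap) $I$-functions $I^{tw}_{V//T_G}$ and $I_{V//G}$ and show they are related by summation over Weyl orbits, possibly combined with a change of Novikov variables. For each cocharacter $d$ of $T_G$ the twisted summand on the abelian side carries a factor resembling $\prod_{\alpha\in\Delta^+}\prod_{k=1}^{\langle\alpha,d\rangle}(1-e^{\alpha}q^k)$ built from the roots, and $W$-antisymmetrizing these factors should collapse the sum to the determinantal contributions indexed by dominant cocharacters of $G$ appearing in $I_{V//G}$. Once this identification is in place, I would pass from $I$-functions to $J$-functions via Birkhoff factorization in $q$ and then reconstruct the quantum product using the string and divisor equations, yielding the desired ring map on generators. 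Surjectivity at $Q=0$ is Martin's theorem, and a Nakayama-type argument in the Novikov variables extends this to the formal neighborhood of the origin.

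The main obstacle is controlling the Birkhoff factorization: unlike in cohomology, where $I$ and $J$ agree up to asymptotics, in quantum $K$-theory passage from $I$ to $J$ involves a nontrivial $q$-difference operator acting on the Novikov variables, and combining this with the root twisting produces correction terms that must be absorbed into the mirror map relating the $Q$ variables on the two sides. The \say{mild conditions} in the statement should correspond precisely to tameness of this mirror map, for instance a semi-positivity or Fano-type hypothesis on $V//G$ ensuring that the comparison of $I$-functions is polynomial enough to invert and that the resulting change of variables is a well-defined automorphism of the Novikov ring.
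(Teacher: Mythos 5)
First, a structural point: the statement you are proving is labelled a \emph{conjecture} in the paper, and the paper does not prove it in the stated generality. It only invokes the special case $V//G = Fl(v_1,\dots,v_n;N)$, which was established in the author's earlier work \cite{me} by reducing to a theorem of Yan \cite{Xiaohanflag} comparing the twisted \emph{Lagrangian cones} of $V//T_G$ and $V//G$. So there is no general proof in the paper to measure yours against; the comparison below is with the route the paper cites.

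Your outline points in a reasonable general direction (abelianized $I$-function with root factors, Weyl symmetrization, a Nakayama-type argument in $Q$ for surjectivity), but several steps would fail as written. You cannot \emph{reconstruct the quantum product using the string and divisor equations}: quantum $K$-theory has no divisor equation in the cohomological sense, and passing from a $q$-difference equation satisfied by the $J$-function to an actual ring relation requires the Iritani--Milanov--Tonita formalism together with a quantum-triviality statement controlling the deformed operators $A_{i,com}$ --- degree bounds on $J_d$ that must be proven, not assumed, and that occupy a substantial part of the paper even in the flag case. Moreover, matching small $I$- or $J$-functions is not enough to produce a ring homomorphism respecting quantum pairings; the conjecture is a statement about three-point invariants, and the cited proof goes through an identification of the full genus-zero theories (symmetrized Lagrangian cones), which is precisely the content of Yan's theorem that your sketch replaces with an unproven claim that the abelian sum \emph{collapses} under $W$-antisymmetrization. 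Finally, the twisting is not the plain $K$-theoretic Euler class of the root bundle, which is not invertible: one must use the equivariant Euler classes $Eu_{\lambda_r}(L_r)$ with root parameters satisfying $\lambda_r\lambda_{-r}=1$ and $w\lambda_r=\lambda_{wr}$, and make sense of the resulting expressions in Yan's rational loop spaces; your construction silently assumes this regularization and the existence of a well-behaved limit.
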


In the same work, the author gave a proof of this conjecture for the standard GIT description on $Fl$, given by
$$Hom(\mathbb{C}^{v_1},\mathbb{C}^{v_{2}})\times\dots Hom(\mathbb{C}^{v_n},\mathbb{C}^N)//\prod_{i=1}^n GL(v_i)$$

In this work, we use the above result to compute the ring $QK(Fl)$. In doing so, we discover the following: 

Let $Y$ denotes the abelianization of $Fl$. It turns out that the natural interpretation of the Bethe Ansatz/Coulomb Branch equations are as limits of relations in $QK^{tw}(Y):$
\begin{thm}
    
For certain choices of bundles $P^i_j$, identified with the Chern roots of $\mathcal{S}_i$ by $\phi$, we have that $QK^{tw}(Y)$ is determined by the following relations, for each $i,j$. 

$$\prod_{b=1}^{v_{n+1}}\prod_k(1-\frac{P^i_j}{P^{i+1}_b})=Q_{ij}\prod_{a=1}^{v_{n-1}}(1-\frac{P^{i-1}_a}{P^{i}_j})\prod_{k\neq j}\frac{(1-\lambda\frac{P_j^i}{P_k^{i}})}{ (1-\lambda \frac{P^i_k}{P^i_j})}$$

\end{thm}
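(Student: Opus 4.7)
My plan is to identify $Y$ explicitly as a smooth toric variety, apply the toric quantum $K$-theoretic formulas to write down its untwisted presentation, and then incorporate the abelianization twist of \cite{me} to read off the stated Bethe-Ansatz-style relations.

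First I would describe $Y = V//T_G$ concretely. With $V = \bigoplus_i \operatorname{Hom}(\mathbb{C}^{v_i},\mathbb{C}^{v_{i+1}})$ and $T_G = \prod_i (\mathbb{C}^*)^{v_i}$, the quotient $Y$ is a smooth toric variety, equivalently an iterated tower of products of projective bundles, of Picard rank $\sum_i v_i$. Its toric data furnishes a primitive ray for each pair $(i,j)$, whose K\"ahler parameter is $Q_{ij}$ and whose $(i,j)$-coordinate bundle is $P^i_j$. Under the $(i,j)$-destabilization, the coordinate bundle $P^{i+1}_b$ carries charge $+1$ and $P^{i-1}_a$ carries charge $-1$, matching the exponents appearing in the target relation.

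Second, by the toric quantum $K$-theoretic formulas (in the style of Givental-Tonita and Iritani-Milanov-Tonita), the untwisted small quantum $K$-ring of $Y$ has one relation per ray of the form
\begin{equation*}
\prod_{b=1}^{v_{i+1}}\left(1-\frac{P^i_j}{P^{i+1}_b}\right) \: = \: Q_{ij}\prod_{a=1}^{v_{i-1}}\left(1-\frac{P^{i-1}_a}{P^{i}_j}\right).
\end{equation*}
Third, I would incorporate the twist from \cite{me}, which modifies the virtual structure sheaf by the $K$-theoretic Euler class of the root bundle $\bigoplus_{i}\bigoplus_{j\neq k} R^i_{j,k}$, where $R^i_{j,k} = (P^i_j)^{\vee}\otimes P^i_k$ carries the equivariant weight $\lambda^i_{j,k}$. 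Under the $(i,j)$-destabilization, $R^i_{j,k}$ has $P^i_j$-charge $-1$ while $R^i_{k,j}$ has $P^i_j$-charge $+1$; in the twisted small $J$-function these contribute Pochhammer-type factors whose $q \to 1$ specialization is $\prod_{k\neq j}(1 - \lambda^i_{j,k} P^i_j/P^i_k)/(1 - \lambda^i_{k,j} P^i_k/P^i_j)$. Multiplying this factor onto the right-hand side of the untwisted relation gives the stated form, and a rank comparison against the $T$-equivariant $K$-rank of $Y$ shows that these relations present the ring.

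The main obstacle is the third step: pinning down the exact shape of the twist factor, in particular the placement of $\lambda^i_{j,k}$ in the numerator versus $\lambda^i_{k,j}$ in the denominator, requires careful tracking of charge signs of each root under the chosen destabilization, together with a clean argument that no further quantum corrections survive the $q \to 1$ specialization. I would cross-check the answer against the Grassmannian relations in \cite{conj} and the two-step flag computation in \cite{qwhit}, and confirm that taking the Weyl-invariants and applying $\phi_Q$ recovers the Whitney presentation of Conjecture \ref{whitconj}.
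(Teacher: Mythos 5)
Your overall strategy is the same as the paper's: describe $Y$ as a toric tower of projective bundles, write down its ($q$-hypergeometric) twisted $J$-function, read off $q$-difference equations one per Novikov variable $Q^i_j$, and specialize $q\to 1$ to get the relations. However, there are two genuine gaps at exactly the points you flag as "obstacles" but do not resolve, and these are where essentially all of the paper's work lies.

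First, you assert that the toric/twisted hypergeometric series computes the \emph{small} quantum $K$-ring, but in quantum $K$-theory the $I$-function produced by the toric or abelianization mirror theorems is only guaranteed to lie on the (symmetrized, twisted) Lagrangian cone; identifying it with the value of the big $J$-function at $t=0$ requires showing that, modulo $(1-q)$ times rational functions with poles only at roots of unity, the series reduces to $1-q$. The paper does this by ruling out poles at $q=0$ and $q=\infty$ term by term: it localizes to the distinguished fixed point $\tilde{A}$ (using $W$-transitivity on fixed points), shows degrees $d$ violating $d^i_j\geq d^{i+1}_j$ contribute zero, cancels the $q=0$ poles against explicit factors in the denominator, and computes the $q$-degree of each coefficient $J_d$ via an explicit binomial-coefficient formula. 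Your proposal contains no analogue of this step, and without it the relations you write down could be relations among the deformed multiplication operators of a shifted point of the cone rather than of the small quantum product.

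Second, the statement that "no further quantum corrections survive the $q\to 1$ specialization" is precisely the content of the quantum triviality theorem of Anderson--Chen--Iritani--Tseng (in its twisted form from \cite{me}), whose hypothesis is a quantitative bound $\deg(J_d) < -d_i$ on the $q$-degree of each coefficient. The paper proves two nontrivial lemmas establishing $\deg(J_d) < -\sum_j d^i_j$ and the refined bound $\deg(J_d) < -F_\ell(d^{i+1}) + (v_i - v_{i+1}+\ell)d^i_j$ by a telescoping/induction argument on the tower; these are needed both to replace $A_{i,com}$ by $P^i_j$ in the relations here and later to show the products $e_{\ell-j}(P^{i+1})h_j(P^i)$ are classical. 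A cross-check against the Grassmannian and two-step cases does not substitute for these estimates. Finally, a small bookkeeping point: the relations are indexed by the Novikov variables $Q^i_j$ (dual to the divisor classes $p^i_j$), not by the rays $u^i_{j,k}=p^i_j-p^{i+1}_k$ of the secondary fan, of which there are many more; your "one relation per ray" count conflates the two.
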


After setting $\lambda$ to 1, and specializing $Q^i_j$ to $Q_i$, we recover the Bethe Ansatz. This provides a direct geometric interpretation for the Bethe Ansatz equation.

Using this result, we prove the following theorems:

\begin{thm}
\hfill
\begin{enumerate}
    \item Conjecture \ref{whitconj} is true. (We actually prove a stronger version, given in equation \ref{gamer}, this conjecture appears in \cite{conj} but not \cite{incidence}.)
    
    \item Conjecture \ref{rtva} is true, after some quantum corrections which can be calculated explicitly. These corrections are implicit in the work of \cite{manybody} where study the case of the full flag.

   \item Conjecture \ref{iso} is true, with the isomorphism $QK^{QM}(Fl)\to QK(Fl)$ given by $\widehat{\tau}\mapsto \phi_Q(\tau(P^i_j))$.
\end{enumerate}
\end{thm}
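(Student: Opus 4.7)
The plan is to use the $QK^{tw}(Y)$ relations just established, together with the abelian/non-abelian correspondence $\phi_Q\colon QK^{tw}(Y)^W\to QK(Fl)$ from \cite{me}, as the engine for all three parts. Both the Whitney presentation and the Wronskian presentation, as well as the comparison with $QK^{QM}(Fl)$, should reduce to understanding how appropriate symmetric combinations of the stated relations descend across $\phi_Q$, and how their eigenvalues against $T$-fixed points reproduce the Bethe data.

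I would first handle Conjecture \ref{whitconj}. Under $\phi_Q$, the formal roots $P^i_j$ are identified with the Chern roots of $\mathcal{S}_i$, so Weyl-invariant polynomials in the $P^i_j$ at level $i$ are polynomials in the exterior powers $\Lambda_y(\mathcal{S}_i)$. Starting from
$$\prod_{b}\prod_k\bigl(1-\tfrac{P^i_j}{P^{i+1}_b}\bigr)=Q_{ij}\prod_{a}\bigl(1-\tfrac{P^{i-1}_a}{P^{i}_j}\bigr)\prod_{k\neq j}\tfrac{(1-\lambda^i_{j,k}\tfrac{P^i_j}{P^i_k})}{(1-\lambda^i_{k,j}\tfrac{P^i_k}{P^i_j})},$$
I would clear denominators, sum over $j$ with a Lagrange-interpolation weight that symmetrizes both sides simultaneously, and then specialize $\lambda^i_{j,k}\to 1$ and $Q^i_j\to Q_i$. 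A residue computation on the resulting rational function in a dummy variable should convert the left side into $\Lambda_y(\mathcal{S}_i)\ast\Lambda_y(\mathcal{R}_i)-\Lambda_y(\mathcal{S}_{i+1})$ and the right side into $y^{v_{i+1}-v_i}\tfrac{Q_i}{1-Q_i}\det(\mathcal{R}_i)\ast(\Lambda_y(\mathcal{S}_i)-\Lambda_y(\mathcal{S}_{i-1}))$, recovering Conjecture \ref{whitconj} coefficient by coefficient.

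For Conjectures \ref{rtva} and \ref{iso}, I would take a spectral route. By Koroteev-Pushkar-Smirnov-Zeitlin, the eigenvalues of $\widehat{\tau}$ on the $T$-fixed-point basis of $QK^{QM}(Fl)$ are $\tau$ evaluated at solutions of the Bethe Ansatz equations. By the abelian/non-abelian correspondence together with the newly established theorem on $QK^{tw}(Y)$, after the specialization $\lambda\to 1$ the operator $\phi_Q(\tau(P^i_j))$ acts on the matching fixed-point basis of $QK(Fl)$ with the same eigenvalues. So the assignment $\widehat{\tau}\mapsto\phi_Q(\tau(P^i_j))$ preserves spectra. I would promote this to a ring isomorphism by fixing a generic specialization of the equivariant parameters that makes the joint spectrum simple, so that both rings embed into the same product of fields, and then comparing ranks over the equivariant/Novikov base (both equal the number of Schubert classes). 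Conjecture \ref{rtva} then follows by transporting the Wronskian presentation of $QK^{QM}(Fl)$ across this isomorphism; the ``quantum corrections'' referred to in the statement are exactly the images under $\phi_Q$ of the formal variables building $W_Q$.

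The main obstacle is completeness: the argument above produces Whitney-type relations as \emph{consequences} of the $QK^{tw}(Y)$ relations via $\phi_Q$, but one still has to show they generate the full kernel of the presentation map. I would address this by a flatness argument. Let $R$ denote the abstract quotient defined by the Whitney relations; check that $R$ is a free module over the equivariant and Novikov rings, and bound its rank by specializing $Q_i\to 0$, which recovers the classical $K$-theoretic Whitney relations whose rank is known. Once the rank of $R$ matches that of $QK(Fl)$, the natural surjection $R\twoheadrightarrow QK(Fl)$ must be an isomorphism. Most of the technical bookkeeping lives in tracking the twisting datum and verifying the hypotheses of Conjecture \ref{abnonab} throughout; the Wronskian side inherits completeness for free once Conjecture \ref{iso} is in hand.
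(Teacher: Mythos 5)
Your overall architecture (descend symmetrized relations from $QK^{tw}(Y)$ through $\phi_Q$, then prove completeness by a rank/Nakayama argument) matches the paper, and your final completeness step is essentially the paper's $K$-theoretic Nakayama corollary. But each part has a concrete gap. For Conjecture \ref{whitconj}, the symmetrization is not a Lagrange-interpolation/residue computation: the paper introduces the polynomial $F_i(t)$ whose roots are the $P^i_j$ together with $v_{i+1}-v_i$ auxiliary roots $\bar{P}^i$, and runs Vieta's formulas. The step your sketch omits entirely is Lemma 8.1 --- that the quantum products $e_{\ell-j}(P^{i+1})h_j(P^i)$ and $e_\ell(P^i)$ in $QK^{tw}(Y)$ coincide with their classical counterparts, which requires the two degree bounds on $J_d$ fed into the quantum triviality theorem. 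Without that lemma you cannot evaluate the quantum products appearing in any symmetrized identity, and you also lose the $(1-Q_i)^{-1}$ factor in $e_{v_{i+1}-v_i}(\bar{P}^i)$ that produces the quantum quotient bundle $\widehat{\mathcal{R}}_i$ and hence the precise form of equation \eqref{gamer}; your claim that the symmetrization lands directly on the conjectured Whitney relation silently assumes all products are classical.

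For parts (2) and (3) the spectral route overclaims. The paper does not (and explicitly cannot, per its closing Remark) show that $\phi_Q(\tau(P^i_j))$ acts on a fixed-point basis of $QK(Fl)$ with the Bethe eigenvalues: the isomorphism obtained is only abstract, and the two products could a priori differ by an automorphism of the form $I+o(Q)$. So ``the assignment preserves spectra'' and ``both rings embed into the same product of fields'' are not available; the paper instead notes that the Vieta relations for $F_i(t)$ hold in both rings, producing a ring homomorphism that is the identity on $K^*_T(Fl)$ modulo $Q$, and concludes by Nakayama. Likewise, transporting the Wronskian presentation across the isomorphism presupposes that $QK^{QM}(Fl)$ is known to have that presentation for general partial flags, which it is not (only the full flag case is treated in the cited work). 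The paper instead proves Conjecture \ref{rtva} directly: it identifies the $\gamma^i_j$ with the auxiliary roots $\bar{P}^i$, i.e.\ the Chern roots of $\widehat{\mathcal{R}}_i$ (these are the ``quantum corrections''), and shows by induction, expanding the determinant and invoking \eqref{gamer}, that $\det(M_j)=\Lambda_y(\mathcal{S}_j)$.
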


\subsection*{Connection to other work}
 The work of Koroteev-Pushkar-Smirnov-Zeitlin in \cite{manybody} identifies the quasimap ring with the Bethe algebra by means of Baxter's $Q$-operator, which arises in quantum $K$-theory as a generating function for the operation of quantum multiplication by the quantum tautological bundles. This is sufficient to establish an isomorphism of the collection of quantum $K$-rings for a given $N$ with the Bethe algebra. 

 The work Korff-Gorbunov in \cite{korff} prove a more explicit identification for the non-equivariant (stable map) quantum $K$-ring of the Grassmanian, that in particular computes the Bethe eigenvectors, and the transfer matrices of the algebra, which are not identified explicitly in \cite{manybody}. Their work also uses the Bethe Ansatz equation, but does not regard Chern roots of tautological bundles as solutions to the equation. We do not know of an explicit relationship ebtween our work and theirs. 

The relations we obtain after passing to the abelianization are symbols of $q$-difference operators acting on certain $J$-functions. The author was recently made aware that the connection between these operators and the Bethe Ansatz was observed by W. Gu in \cite{conj} (see remark 9.5). One can regard Conjecture 1.2 (originally made in \cite{me}), as the precise statement of the abelian/non-abelian correspondence conjecture suggested in that remark.

This project is contemporaneous with one joint with Amini-Mihalcea-Orr-Xu that shows that the Whitney presentation for full flags implies Conjecture \ref{whitconj} in general, using Kato's pushforward theorem for $QK(G/P)$.

\subsection*{Acknowledgements}
The author thanks Kamyar Amini, Leonardo Mihalcea, Daniel Orr, and Weihong Xu for useful discussions. 

The author is thankful to Wei Gu, Eric Sharpe, and Hao Zou for explaining the physical ideas used in the theory. 

The author is thankful to Christian Korff and Mikhail Vassiliev for explaining the relevant integrable systems, and for welcoming him to Glasgow, where his visit was funded by the Royal Society International Exchanges grant \emph{Quantum K-theory and quantum integrable models}.  

The author also wishes to thank A. Smirnov for explanations of his work.
As well as X. Yan for some useful corrections. 

\section{Quantum $K$-Rings and the $J$-function}

\subsection{$K$-theoretic Gromov-Witten Invariants}
$K$-theoretic Gromov-Witten invariants are defined as certain holomorphic Euler characteristics on the Kontsevich moduli space of stable maps $X_{g,n,d}(X)$ (this will be shortened to $X_{g,n,d}$ for the remainder of this text). Given Laurent polynomials $f_i(q)$ with coefficients in $K^*(X)$, the associated Gromov-Witten invariant is written in correlator notation as $\langle f_1,\dots,f_n\rangle_{g,n,d}$ and is defined to be:

$$\chi(X_{g,n,d};\mathcal{O}^{vir}\otimes \prod_i ev_i^*f_i(L_i))$$

Here the notation $ev_i^*f_i(L_i)$ means to pull back the coefficients of $f_i$ by $ev_i$, and evaluate the resulting polynomial at $q=L_i$.
\subsubsection{Twistings}

Given a vector bundle $V$ and an (invertible) $K$-theoretic characteristic class $C$, we can define \emph{twisted} $K$-theoretic Gromov-Witten invariants by replacing $\mathcal{O}_{vir}$ with $\mathcal{O}_{vir}\otimes C(\pi_*ev_{n+1}^*V)$. \\

This also works if $C$ is not invertible by introducing some equivariant parameter and then passing to the limit, so we are allowed to let $C$ be the $K$-theoretic Euler class. 

\subsection{Quantum $K$-ring}

The genus-zero 2 and 3 pointed invariants of a target $X$ are organized into the (small) quantum $K$-ring, a deformation of of $K^*(X)$ over the Novikov variables $Q_\alpha$, the semigroup ring of the effective curve classes in $H_2(X)$. The ring is defined as follows. 

First, define the quantum pairing, by:

$$((a,b)):=\sum_d Q^d \langle a,b\rangle_{0,2,d}$$

The product in the quantum $K$-ring is defined as a deformation of the structure constants of $K^*(X)[[Q]]$, with respect to the quantum pairing, in the following way. 

$$((a*b,c)):=\sum_{d}Q^d\langle a,b,c\rangle_{0,3,d}$$

\begin{thm}[Y-P Lee]
These operations make $QK(X)$ a commutative associative ring with unit 1. 
\end{thm}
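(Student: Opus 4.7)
The plan is to establish the three ring axioms separately, with associativity being by far the most delicate.

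\textbf{Commutativity.} Commutativity of $*$ is the easiest: it reduces to the $S_3$-symmetry of the three-point invariant $\langle a,b,c \rangle_{0,3,d}$ under permutations of marked points. Indeed, there is an $S_3$-action on $X_{0,3,d}$ permuting the marking labels which preserves $\mathcal{O}^{\mathrm{vir}}$ and intertwines the evaluation maps, so $\langle a,b,c \rangle_{0,3,d} = \langle b,a,c\rangle_{0,3,d}$. The same symmetry applied to the quantum pairing then yields $((a*b,c)) = ((b*a,c))$ for all $c$, and non-degeneracy of $((\cdot,\cdot))$ (as a deformation of the classical Euler pairing, which is non-degenerate) upgrades this to $a*b = b*a$.

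\textbf{Unit.} For the unit axiom $1 * a = a$, I would use the forgetful map $\pi\colon X_{0,3,d}\to X_{0,2,d}$ that drops the first marked point together with the $K$-theoretic fundamental class axiom: when the first insertion is $\mathrm{ev}_1^*(1) = \mathcal{O}$, one has $\pi_*(\mathcal{O}^{\mathrm{vir}}_{0,3,d}) = \mathcal{O}^{\mathrm{vir}}_{0,2,d}$ on the stable locus, with the unstable components ($d=0$, two points) contributing exactly the classical term. This gives $\langle 1, a, b\rangle_{0,3,d} = \langle a,b\rangle_{0,2,d}$ for $d > 0$ and the classical triple pairing in degree zero, which when summed over $d$ yields $((1*a, b)) = ((a,b))$, hence $1*a = a$ by non-degeneracy.

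\textbf{Associativity.} This is the main obstacle. The goal is to show $((a*b)*c,d)) = ((a*(b*c),d))$, and by expanding both sides using the definition of $*$ and the quantum pairing, both reduce to a sum, over $d$ and a ``splitting index,'' of four-point invariants $\langle a,b,c,d\rangle_{0,4,d}$ computed via the two different degenerations of the universal curve over $\overline{M}_{0,4}$. The key step is the $K$-theoretic splitting/gluing axiom: the restriction of $\mathcal{O}^{\mathrm{vir}}_{0,4,d}$ to a boundary divisor $D(I|J) \subset \overline{M}_{0,4}$ is expressed as a convolution over the diagonal $\Delta_*\mathcal{O}_X \in K(X\times X)$ of virtual structure sheaves on two smaller moduli spaces. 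Unlike in cohomology, $\Delta_*\mathcal{O}$ does not split as a finite sum $\sum e_i \otimes e^i$ with respect to the classical pairing; however, when one expands the convolution using the quantum pairing as ``metric'' and inserts the geometric series defining it, the invariants from the two boundary points of $\overline{M}_{0,4}$ are equal term by term. Concretely, I would introduce dual bases $\{\phi_\alpha\}, \{\phi^\alpha\}$ for the quantum pairing and verify that the gluing formula reads
\begin{equation*}
\langle a, b, c, d\rangle_{0,4,d} \; = \; \sum_{d_1+d_2 = d} \sum_{\alpha} \langle a,b,\phi_\alpha\rangle_{0,3,d_1}\langle \phi^\alpha,c,d\rangle_{0,3,d_2} \; + \; (\text{corrections})
\end{equation*}
where the corrections cancel when the same computation is performed at the other boundary point. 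The main difficulty lies in handling the $K$-theoretic corrections to the na\"ive splitting formula, for which I would appeal to Lee's analysis of the virtual structure sheaf on $X_{0,n,d}$ near boundary strata; once the corrections are absorbed into the definition of the quantum pairing, associativity follows immediately from the equality of the two expansions. Finiteness/convergence of the sum in $d$ is not an issue since everything takes place formally over $\mathbb{Z}[[Q]]$.
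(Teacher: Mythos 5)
The paper does not prove this statement: it is quoted verbatim as a foundational theorem of Y.~P.~Lee and used as a black box, so there is no internal proof to compare against. Measured against Lee's actual argument, your outline identifies the right skeleton: commutativity from the $S_3$-symmetry of the correlators plus non-degeneracy of the quantum pairing (which does follow, as you say, from invertibility of the Gram matrix modulo $Q$ over the Novikov ring), the unit axiom from the $K$-theoretic fundamental-class/string equation $\pi_*\mathcal{O}^{\mathrm{vir}}_{0,3,d}=\mathcal{O}^{\mathrm{vir}}_{0,2,d}$ with the $d=0$ term supplying the classical pairing, and associativity from a WDVV-type identity over $\overline{M}_{0,4}$ in which the classical Euler pairing must be replaced by the quantum pairing. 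This is genuinely the route Lee takes.

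The gap is in the associativity step, which is where all the content of the theorem lives. You write the splitting formula with dual bases for the quantum pairing \emph{and} an unspecified ``corrections'' term, and then assert both that these corrections cancel between the two boundary points and that they are absorbed into the quantum pairing; these are two different claims, and only the second is what actually happens. The precise mechanism --- that the boundary divisor over a point of $\overline{M}_{0,4}$ is a union of fiber products over $X$ indexed by chains of degenerations of arbitrary depth, that the $K$-class of this union is computed by inclusion--exclusion over the stratum poset, and that resumming the resulting alternating series over chain length is exactly what inverts $((\cdot,\cdot))=\chi(\cdot\otimes\cdot)+O(Q)$ in place of the classical pairing --- is the theorem, and your proposal defers it to ``Lee's analysis of the virtual structure sheaf near boundary strata,'' i.e.\ to the result being proved. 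One also needs the input that the two boundary points of $\overline{M}_{0,4}\cong\mathbb{P}^1$ have equal classes in $K$-theory (linear equivalence of points), which you use implicitly but do not state. As a proof strategy the proposal is faithful; as a proof it is a citation with the hardest lemma left open.
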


One can define the same notions using twisted invariants, to define an object $QK^{tw}(X)$, which is a different deformation of $K^*(X)$, depending on the choice of twisting. In \cite{me}, it was established that doing this also results in a ring:

\begin{thm}
$QK^{tw}(X)$ is a commutative associative ring with unit 1. 
\end{thm}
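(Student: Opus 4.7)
The plan is to follow the architecture of Y.-P. Lee's proof of the preceding theorem, verifying that each step survives the introduction of the twisting class $C(\pi_* ev_{n+1}^* V)$ in the virtual structure sheaf. Concretely, I would verify in turn: (i) the twisted pairing $((a,b))^{tw}$ is symmetric and is a formal deformation of a non-degenerate pairing on $K^*(X)$, so that structure constants determine a well-defined product; (ii) commutativity $a*b = b*a$; (iii) the unit axiom $1*a = a$; and (iv) associativity $(a*b)*c = a*(b*c)$.

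Steps (i) and (ii) are essentially free. The moduli spaces $X_{0,2,d}$ and $X_{0,3,d}$ carry natural permutation actions on their marked points, and the twisting class is pulled back along the forgetful map $\pi : X_{0,n+1,d} \to X_{0,n,d}$, hence is invariant under these permutations. Thus the twisted $2$- and $3$-point correlators are symmetric in all their arguments. At $Q=0$ the twisted pairing reduces, up to multiplication by $C(V)$ on the diagonal, to the classical $K$-theoretic Mukai pairing on $X$, which is non-degenerate. The unit axiom (iii) reduces to the $K$-theoretic string equation for twisted invariants: forgetting the marked point carrying the class $1$ identifies $\langle 1, a, b\rangle_{0,3,d}^{tw}$ with $\langle a, b\rangle_{0,2,d}^{tw}$, because $\pi_* ev^* V$ on $X_{0,3,d}$ is pulled back via the forgetful-point map from $X_{0,2,d}$, with the correction at the distinguished marked point disappearing when that insertion is the structure sheaf.

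The real content is associativity. In the untwisted case this is the $K$-theoretic WDVV identity, which follows from comparing the two boundary degenerations of $X_{0,4,d}$ arising from the two boundary divisors of $\overline{M}_{0,4}$ and applying a splitting formula for the virtual structure sheaf along the resulting strata. To extend this to the twisted setting, one needs a matching splitting of the twisting class: when a stable map degenerates to $C_1 \cup_p C_2$, the normalization sequence yields $\pi_* ev^* V = \pi_{1*} ev_1^* V \oplus \pi_{2*} ev_2^* V \ominus V_p$ in $K$-theory, where $V_p$ is the fiber of $V$ at the image of the node. Multiplicativity $C(U \oplus W) = C(U) C(W)$ then converts this into a factorization $C(V^{(1)}) \cdot C(V^{(2)}) \cdot C(V_p)^{-1}$ on the boundary, and the extra factor $C(V_p)^{-1}$ modifies the gluing/diagonal class in exactly the way that is compensated by the corresponding modification of $((\cdot,\cdot))^{tw}$. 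Once this splitting axiom is established on each boundary divisor, the usual WDVV argument produces $(a*b)*c = a*(b*c)$.

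The main obstacle is therefore the bookkeeping of the twisting class under the gluing maps $X_{0,n_1+1,d_1} \times_X X_{0,n_2+1,d_2} \to X_{0,n_1+n_2,d_1+d_2}$, namely verifying that the $C(V_p)^{-1}$ distortion of the gluing class matches the distortion already built into $((\cdot,\cdot))^{tw}$. One must also handle non-invertible $C$, such as the $K$-theoretic Euler class, by introducing an auxiliary equivariant parameter and passing to the limit, as indicated in the definition of the twisting. With these ingredients in place, the ring axioms all reduce to their untwisted counterparts and the proof proceeds as in Lee.
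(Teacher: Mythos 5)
The paper does not actually prove this statement: it is quoted from \cite{me}, where the twisted quantum $K$-ring is constructed. Your outline matches the strategy one expects there (and in Lee's untwisted argument): symmetry of the twisted correlators, the string equation for the unit, and a WDVV identity whose only new ingredient is the behavior of the twisting class on boundary strata, where the normalization sequence gives $\pi_*ev^*V = \pi_{1*}ev_1^*V \oplus \pi_{2*}ev_2^*V \ominus V_p$ and multiplicativity of $C$ produces the factor $C(V_p)^{-1}$ at the node. You correctly identify that this factor must be absorbed into the twisted quantum pairing. The one place your sketch understates the difficulty is the splitting axiom itself: in $K$-theory the pushforward of $\mathcal{O}^{vir}$ from the boundary of $X_{0,4,d}$ is not supported on a single gluing stratum but on chains of contracted rational curves, which is precisely why the gluing class is the inverse of the \emph{quantum} (two-point deformed) pairing rather than a node-corrected diagonal; in the twisted setting one must check that the same chain-summation argument goes through with every virtual sheaf in the chain twisted and with one $C(V_p)^{-1}$ per node, so that the result is the inverse of $((\cdot,\cdot))^{tw}$. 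Since your step (i) already builds the $C(V_p)^{-1}$ correction into the pairing and your step (iv) asserts the compensation, the architecture is right; filling in that chain computation is the only substantive work remaining, and it is exactly what the cited reference carries out.
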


\begin{ex}
    For $X=\mathbb{P}^n$, the Novikov ring is generated by a single variable $Q$, corresponding to the dual to the hyperplane class. The quantum $K$-ring is determined by the relation:

    $$(1-\mathcal{O}(-1))^{n+1}=Q$$

    We can also begin by using the equivariant $K$-theory. For $T$ the standard $n+1$-torus action on $\mathbb{P}^n$, with parameters $\Lambda_i$, the equivariant $K$-theory $QK_T(\mathbb{P}^n)$ is determined by the relations:

    $$\prod_i(1-\mathcal{O}(-1)\Lambda_i^{-1})=Q$$
    \end{ex}
\subsection{$J$-Function}

Given a target space $X$ and $\phi_\alpha$ a basis for $K^*(X)$, we define the (small) $J$-function of $X$ as :

$$J_X(q)=1-q+\sum_{d>0,n} \langle\phi^\alpha\frac{\phi_\alpha}{1-qL_1}\rangle_{0,1,d} $$

The $J$-function is related to the quantum $K$-ring in the following way. If we can choose a set of line bundles $P_i$ with $-c_1(P_i)$ corresponding to the Novikov variable $Q_i$, we have the following theorem due to Iritani-Milanov-Tonita

\begin{thm}[Iritani-Milanov-Tonita \cite{IMT}]
\label{imt}
If some operator $F(q^{Q_i\partial_{Q_i}},q,Q)$ annihilates $\tilde{J}$, then: 

$$F(A_i^{com},1,Q)=0\in QK^*(X)$$
Where $A_{i,com}$ is a deformation of the operator of quantum multiplication by $P_i$. (See \cite{IMT} for more details about the construction of this deformation)
\end{thm}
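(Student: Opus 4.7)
The plan is to apply Givental's Lagrangian cone formalism in $K$-theory, following the strategy of Iritani-Milanov-Tonita. Let $\mathcal{K} = K^{*}(X)[[Q]](\!(q-1)\!)$ denote the symplectic loop space equipped with the appropriate Givental pairing, and let $\mathcal{L}_{X} \subset \mathcal{K}$ be the Lagrangian cone whose points encode the genus-zero $K$-theoretic Gromov-Witten invariants of $X$. After the usual dilaton shift, $\tilde{J}$ is a distinguished point on $\mathcal{L}_{X}$. The key structural property to exploit is that at any smooth point $t \in \mathcal{L}_{X}$, the tangent space $T_{t}\mathcal{L}_{X}$ is stable under the finite-difference operators $q^{Q_{i}\partial_{Q_{i}}}$, under multiplication by $q^{\pm 1}$, and under multiplication by the Novikov variables $Q_{\alpha}$; this encodes the $K$-theoretic analogues of the string, dilaton, and divisor equations.

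The next step, again following Iritani-Milanov-Tonita, is to identify the quotient module $T_{\tilde{J}}\mathcal{L}_{X} \,/\, (1-q)\, T_{\tilde{J}}\mathcal{L}_{X}$ with $K^{*}(X)[[Q]]$ in such a way that the induced module structure specializes at $q = 1$ to the quantum $K$-product. The operator $A_{i}^{com}$ is then defined to be the action of $q^{Q_{i}\partial_{Q_{i}}}$ on this quotient; by construction, $A_{i}^{com}|_{q=1}$ is quantum multiplication by $P_{i}$. With this machinery in hand, assume $F(q^{Q_{i}\partial_{Q_{i}}}, q, Q)$ annihilates $\tilde{J}$. Differentiating this relation along directions tangent to $\mathcal{L}_{X}$ at $\tilde{J}$ and invoking tangent-space stability, one sees that $F$ acts as zero on every element of $T_{\tilde{J}}\mathcal{L}_{X}$, hence acts as zero on the quotient module. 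Specializing $q \to 1$ and using the identification above yields $F(A_{i}^{com}, 1, Q) = 0$ as an operator on $QK^{*}(X)$, which is the relation claimed.

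The main obstacle, and the reason this theorem is nontrivial, is the rigorous construction of the $K$-theoretic loop-space formalism underlying the entire argument: the Lagrangian property of $\mathcal{L}_{X}$, the correct formulation of Givental's symplectic pairing (which involves adelic subtleties near $q = 1$), and the precise stability of tangent spaces under the finite-difference operators $q^{Q_{i}\partial_{Q_{i}}}$ are each substantial results in the works of Givental, Tonita, and Iritani-Milanov-Tonita. We would import these as a black box; once they are available, the deduction of the relation $F(A_{i}^{com}, 1, Q) = 0$ is a short formal manipulation on the tangent space at $\tilde{J}$.
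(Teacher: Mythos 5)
This statement is not proved in the paper at all: it is imported verbatim from Iritani--Milanov--Tonita, with the reader referred to \cite{IMT} for the construction of $A_{i,com}$. So there is no in-paper argument to compare against; what you have written is an outline of the actual IMT proof, and as an outline it identifies the right ingredients (the $K$-theoretic Lagrangian cone, stability of tangent spaces under the finite-difference operators $q^{Q_i\partial_{Q_i}}$, and the identification of $T_{\tilde J}\mathcal{L}_X/(1-q)T_{\tilde J}\mathcal{L}_X$ with $K^*(X)[[Q]]$ carrying the quantum product). Two points deserve correction. First, your claim that ``by construction, $A_i^{com}|_{q=1}$ is quantum multiplication by $P_i$'' is too strong and contradicts the theorem's own wording: $A_{i,com}$ is only a \emph{deformation} of quantum multiplication by $P_i$ (agreeing with it modulo Novikov variables), and the identification $A_{i,com}=P_i$ is precisely the content of the separate quantum triviality theorem of Anderson--Chen--Iritani--Tseng stated immediately afterward in the paper; if your claim held by construction, that theorem and the degree bounds on $J_d$ proved later would be superfluous. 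Second, the step ``$F$ acts as zero on every element of $T_{\tilde J}\mathcal{L}_X$'' is both unjustified (annihilating the single point $\tilde J$ does not obviously kill the whole tangent space) and unnecessary: the actual deduction uses that $\tilde J$ lies in $(1-q)T_{\tilde J}\mathcal{L}_X$ with image the identity class $1$ in the quotient module, so that $F\tilde J=0$ descends to the vanishing of the single class $F(A_{i,com},1,Q)\cdot 1$ in $QK^*(X)$ --- which is what the conclusion $F(A_i^{com},1,Q)=0\in QK^*(X)$ asserts (an element, not an operator identity). Since the paper itself treats the whole statement as a black box, these corrections matter only if you intend your sketch to stand in for the citation.
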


Due to the presence of $A_{i,com}$, it is difficult to apply this theorem as stated in order to product explicit relations. However, in many special cases, the deformation is trivial and $A_{i,com}=P_i$. These cases are described by what was called in \cite{me} the \textit{quantum triviality theorem}, originally due to Anderson-Chen-Tseng-Iritani in \cite{act}:

\begin{thm}[Anderson-Chen-Tseng-Iritani \cite{act}]
\label{qtriv}
    Let $f(P_iq^{Q_i\partial_{Q_i}})$ be a polynomial $q-$difference operator. If $f\frac{J_X}{1-q}$ vanishes at $q=\infty$ (aside from the $Q^0$ term), then $f(A_{i,com})$ is equivalent to quantum multiplication by $f(P_i)$.
\end{thm}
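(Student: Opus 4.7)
The plan is to exploit the Iritani-Milanov-Tonita (IMT) reconstruction stated just above. Recall that $A_{i,com}$ is the quantum deformation of multiplication by $P_i$ that captures, via its action on $\tilde{J}=J_X/(1-q)$, how the difference operator $P_i\,q^{Q_i\partial_{Q_i}}$ behaves after setting $q=1$. The strategy is to show that the hypothesis on the vanishing at $q=\infty$ is exactly what forces the descent of $f(P_iq^{Q_i\partial_{Q_i}})$ to coincide with ordinary quantum multiplication by $f(P_i)$, rather than to pick up a nontrivial quantum correction.

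First, I would expand $\tilde{J}=1+\sum_{d>0}Q^{d}T_{d}(q)$ in Novikov variables and apply the polynomial operator $f(P_iq^{Q_i\partial_{Q_i}})$ term-by-term. Since $q^{Q_i\partial_{Q_i}}$ acts on $Q^d$ by multiplication by $q^{d_i}$, one obtains
\begin{equation*}
f(P_iq^{Q_i\partial_{Q_i}})\,\tilde{J} \;=\; f(P_i)\;+\;\sum_{d>0}Q^{d}\,f(P_iq^{d_i})\,T_{d}(q).
\end{equation*}
The hypothesis that $f\cdot\tilde{J}$ vanishes at $q=\infty$ aside from the $Q^{0}$ term says that for each $d>0$ the coefficient $f(P_iq^{d_i})T_d(q)$ has no pole (in fact vanishes) at $q=\infty$. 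Geometrically, this is the statement that $f(P_iq^{Q_i\partial_{Q_i}})$ applied to $\tilde{J}$ remains in the ``regular-at-infinity'' stratum of Givental's Lagrangian cone.

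Next, I would invoke IMT in the following refined form: any polynomial difference operator $F$ in $P_iq^{Q_i\partial_{Q_i}}$ whose action on $\tilde{J}$ is regular at $q=\infty$ beyond the $Q^{0}$ part descends, via the substitution $P_iq^{Q_i\partial_{Q_i}}\mapsto A_{i,com}$ at $q=1$, to quantum multiplication by its $Q^{0}$ value at $q=\infty$. Applied to $F=f(P_iq^{Q_i\partial_{Q_i}})$, this descent equals $f(A_{i,com})$ on one hand and quantum multiplication by $f(P_i)$ on the other, giving the desired identity in $QK(X)$.

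The main obstacle I expect is justifying the functorial nature of the IMT descent for composite operators. The IMT theorem is naturally formulated for single shift operators, so one must verify that the descent map $P_iq^{Q_i\partial_{Q_i}}\mapsto A_{i,com}$ extends to a ring homomorphism on the subspace of operators whose action on $\tilde{J}$ is asymptotically regular at $q=\infty$. This amounts to a compatibility between the $q$-shift action and the quantum $K$-ring product, and can be approached by induction on the degree of $f$: at each step one uses that the product of two regular-at-infinity operators remains regular-at-infinity under the given hypothesis, and that the reconstruction is compatible with multiplication along the cone. The Lagrangian cone structure of the space of $J$-functions, together with the homogeneity of the shift action, provides the required geometric input.
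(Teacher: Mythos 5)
The paper does not actually prove this statement: it is imported verbatim from Anderson--Chen--Tseng--Iritani (\cite{act}) and from \cite{me}, so there is no internal proof to compare against. Judged on its own terms, your proposal has a genuine gap. Your first step (expanding $\tilde{J}=1+\sum_{d>0}Q^dT_d(q)$ and observing that the operator acts on the degree-$d$ term by $f(P_iq^{d_i})$) is correct and is a correct reading of the hypothesis. The problem is the second step: the ``refined form'' of Iritani--Milanov--Tonita that you invoke --- that any operator whose action on $\tilde{J}$ is regular/vanishing at $q=\infty$ beyond the $Q^0$ part descends to quantum multiplication by its $Q^0$ value --- is not a form of the IMT theorem stated in the paper (which concerns operators that \emph{annihilate} $\tilde{J}$ and produces the relation $F(A_{i,com},1,Q)=0$). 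It is, essentially verbatim, the quantum triviality theorem you are trying to prove. As written, the argument is circular: all of the content of the theorem is packed into an unproved assertion.

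What is actually needed to close the gap is the $\mathcal{D}_q$-module/Lagrangian cone machinery behind IMT. One must use (i) that applying a polynomial difference operator in $P_iq^{Q_i\partial_{Q_i}}$ to $\tilde{J}$ keeps one in the ruling space of the cone through $J_X$, so that $f(P_iq^{Q_i\partial_{Q_i}})\tilde{J}=S(q)^{-1}\bigl(f(L_i(q))1\bigr)$ for the fundamental solution $S$ and the $q$-deformed operators $L_i(q)$ with $L_i(1)=A_{i,com}$; and (ii) the asymptotic normalization $S(q)^{-1}=\mathrm{Id}+O(q^{-1})$ as $q\to\infty$, which converts the hypothesis ``$f\tilde{J}$ vanishes at $q=\infty$ away from $Q^0$'' into the statement that the input/leading term is the $q$-independent class $f(P_i)$, whence $f(A_{i,com})1=f(P_i)$ and, by commutativity of the $A_{i,com}$ with the quantum product, $f(A_{i,com})=f(P_i)*$. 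Your closing paragraph correctly senses that something must be verified about composite operators, but the ring-homomorphism property of the descent is already built into the IMT framework; the real work, which your sketch does not supply, is the comparison of $q\to\infty$ asymptotics through the fundamental solution. Since the paper itself defers this to \cite{act}, the honest fix here is either to cite that argument or to reproduce the $S$-matrix computation above.
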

This theorem has two important corollaries for computing relations in quantum $K$-rings. 

\begin{cor}
    If we denote the degree $Q^d$ term of $J_X/(1-q)$ by $J_d$, we have that if $deg(J_d)<-d_i$, then $A_{i,com}=P_i$. (This corresponds to choosing $f$ to be the identity)
\end{cor}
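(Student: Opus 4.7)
This corollary is a direct application of the quantum triviality theorem with $f$ taken to be the identity polynomial $f(x)=x$. With this choice, the conclusion $f(A_{i,com}) = f(P_i)$ becomes exactly $A_{i,com} = P_i$ in $QK^*(X)$, which is the desired equality, so the entire task reduces to verifying the hypothesis of the theorem.

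Expanding $J_X/(1-q) = \sum_d J_d Q^d$ and using the fact that $q^{Q_i\partial_{Q_i}}$ acts on $Q^d$ by multiplication by $q^{d_i}$, I would compute
$$P_i \, q^{Q_i\partial_{Q_i}}\, \frac{J_X}{1-q} \;=\; \sum_{d} P_i\, q^{d_i}\, J_d\, Q^d.$$
Since $J_0=1$, the $Q^0$ contribution is simply $P_i$, which the theorem explicitly permits to be nonvanishing at $q=\infty$. For any $d>0$, the coefficient of $Q^d$ is $P_i q^{d_i} J_d$; because $P_i$ carries no $q$-dependence, the order at $q=\infty$ of this coefficient equals $d_i + \deg(J_d)$, and the hypothesis $\deg(J_d) < -d_i$ makes this strictly negative. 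Hence the expression vanishes at $q=\infty$ for every positive Novikov degree, which is precisely what the quantum triviality theorem requires, and the conclusion follows.

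No genuine obstacle is anticipated: the argument is essentially a one-line unwinding of definitions. The only subtleties are to interpret $q^{Q_i\partial_{Q_i}}$ correctly as the $q$-shift operator on Novikov variables, and to apply the theorem Novikov-degree by Novikov-degree, which is compatible with the $Q$-adic structure implicit in $QK^*(X)$.
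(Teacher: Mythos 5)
Your proposal is correct and is exactly the argument the paper intends: the corollary is stated with only the parenthetical remark that it corresponds to taking $f$ to be the identity in the quantum triviality theorem, and your unwinding — that $q^{Q_i\partial_{Q_i}}$ multiplies the $Q^d$ coefficient by $q^{d_i}$, so the hypothesis $\deg(J_d)<-d_i$ forces each positive-degree coefficient to vanish at $q=\infty$ — is precisely the verification being left implicit. No issues.
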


\begin{cor}
    If the previous hypothesis is satisfied, and $f(x_i)$ is a monomial with degree $c_i$ in $x_i$, then if $deg(J_d)<\sum c_id_i$, $f(P_i)\in QK(X)$ is equal to its classical counterpart.  
\end{cor}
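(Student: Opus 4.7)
The strategy is a direct application of the Quantum Triviality Theorem above to the $q$-difference operator obtained by substituting $x_i \mapsto P_i q^{Q_i \partial_{Q_i}}$ into the monomial $f$. Since $f$ has degree $c_i$ in $x_i$, this substitution factors cleanly as
\[
f(P_i q^{Q_i\partial_{Q_i}}) \;=\; \Bigl(\prod_i P_i^{c_i}\Bigr)\prod_i q^{c_i Q_i \partial_{Q_i}},
\]
separating a classical multiplication operator from a pure $q$-shift acting on the Novikov variables.

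The first step is to compute the action of this operator on $J_X/(1-q)$ coefficient by coefficient in $Q$. Since $q^{Q_i\partial_{Q_i}}$ acts on $J_d$ by multiplication by $q^{d_i}$, the $Q^d$-coefficient of $f \cdot J_X/(1-q)$ equals $\prod_i P_i^{c_i} \cdot q^{\sum_i c_i d_i}\, J_d$. The hypothesis on $\deg(J_d)$ is therefore exactly what is needed to ensure this coefficient vanishes at $q = \infty$ for every $d \neq 0$, providing the input required by the Quantum Triviality Theorem. Applying that theorem then shows that $f(A_{i,com}) = \prod_i A_{i,com}^{c_i}$ acts on $QK(X)$ as quantum multiplication by $f(P_i) = \prod_i P_i^{c_i}$.

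The second step is to combine this with the preceding corollary, whose hypothesis we are assuming: we have $A_{i,com} = P_i$ as operators, where the right hand side denotes ordinary (classical) multiplication. Hence $\prod_i A_{i,com}^{c_i}$ is simply classical multiplication by $\prod_i P_i^{c_i}$. Equating the two descriptions of $f(A_{i,com})$ yields that quantum multiplication by $\prod_i P_i^{c_i}$ coincides with classical multiplication by the same class, which is precisely the statement that $f(P_i) \in QK(X)$ equals its classical counterpart. I do not anticipate any serious obstacle: once the Quantum Triviality Theorem and the preceding corollary are in hand, this is essentially a bookkeeping argument, and the only delicate point is verifying that the vanishing condition at $q = \infty$ in the theorem unpacks to the stated bound $\deg(J_d) < \sum_i c_i d_i$ via the diagonal action of $q^{Q_i\partial_{Q_i}}$ on the Novikov expansion.
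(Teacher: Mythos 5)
Your overall route is the intended one (the paper offers no separate proof of this corollary; it is meant to follow by applying the quantum triviality theorem to the monomial operator and combining with the preceding corollary), but two points need repair. First, the degree bookkeeping: after the substitution $x_i\mapsto P_iq^{Q_i\partial_{Q_i}}$, the $Q^d$-coefficient of the output is $\prod_i P_i^{c_i}\,q^{\sum_i c_id_i}J_d$, so vanishing at $q=\infty$ requires $\deg(J_d)+\sum_i c_id_i<0$, i.e. $\deg(J_d)<-\sum_i c_id_i$ (consistent with the first corollary's bound $\deg(J_d)<-d_i$ and with the later lemmas, which establish bounds of exactly this negative form). Your claim that the hypothesis as printed is ``exactly what is needed'' is only true after this sign is made explicit; as written the inequality $\deg(J_d)<\sum_i c_id_i$ would not force vanishing at infinity.

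Second, and more substantively, your step interpreting $A_{i,com}=P_i$ as \emph{classical} multiplication is not what the preceding corollary gives. The quantum triviality theorem concludes that $f(A_{i,com})$ equals \emph{quantum} multiplication by the class $f(P_i)$; taking $f$ to be the identity, the first corollary says $A_{i,com}$ is the operator of quantum multiplication by $P_i$ — it does not say that quantum multiplication by $P_i$ is undeformed. Consequently your assertion that $\prod_i A_{i,com}^{c_i}$ is classical multiplication by $\prod_i P_i^{c_i}$ is unjustified, and the operator identity you end with (quantum multiplication by the fixed class $\prod_i P_i^{c_i}$ equals classical multiplication by it) is both stronger than what is available and not literally the statement being proved, which concerns the iterated quantum product of the generators. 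The correct conclusion of your comparison is: by associativity and commutativity of the quantum product, $\prod_i A_{i,com}^{c_i}=\prod_i(P_i*)^{c_i}$ is quantum multiplication by the quantum monomial $P_1^{*c_1}*\cdots*P_n^{*c_n}$, while the triviality theorem identifies the same operator with quantum multiplication by the classical class $\prod_i P_i^{c_i}$; applying both operators to the unit $1$ (or invoking nondegeneracy of the quantum pairing) gives $P_1^{*c_1}*\cdots*P_n^{*c_n}=\prod_i P_i^{c_i}$, which is the corollary. With these two corrections your argument is complete.
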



Note that these theorems have no direct application in our case, since the line bundles $\det(S_{i})$ do not generate the entire $K$-ring of $Fl$. It may be possible to construct appropriate operators after localization in equivariant cohomology, however we have been unable to do so. 

Instead, we use the following theorems with two modifications, proven by the author in \cite{me}.

\begin{thm}[\cite{me}]
    Both Theorem \ref{imt} and Theorem \ref{qtriv} hold for twisted quantum $K$-rings. 
\end{thm}

\begin{thm}[\cite{me}]
    In the previous two theorems, if $J^{tw}_X$ is replaced with $J^{tw}_X+\epsilon(q)$, where $\epsilon(q)$ is a rational function with coefficients in some submodule $I$ of $K(X)\otimes \Lambda$, Theorem \ref{imt} and Theorem \ref{qtriv} both apply, but all statements about $QK^{tw}(X)$ hold up to elements of $I$. 
\end{thm}

\section{Relations vs Presentations}
The results described in the previous section give methods of establishing relations in quantum $K$-rings. However, to establish a presentation of a ring, it is not sufficient to establish that the desired relations hold, but that they they determine the ring completely. 

 By a theorem of Gu-Mihalcea-Sharpe-Xu-Zhang-Zhou \cite[Thm 4.1]{nakayama}, if the Whitney relations hold, they determine a presentation of $QK(Fl)$. This is a consequence of general commutative algebra result \cite[Thm 2.7]{nakayama}, which also implies the following:

\begin{thm}
A ring map $QK(X)\to QK^{QM}(X)$ that is an isomorphism modulo Novikov variables is an isomorphism. 
    
\end{thm}
 
\begin{proof}
    A direct consequence of \cite[Thm 2.7]{nakayama}. 
\end{proof}







\section{The Classical Abelian/non-Abelian Correspondence}

 Let $V$ be a quasiprojective variety, with a linearized action reductive group $G$. We require that $G$-semistable points are stable, and stabilizer subgroups of all such points are trivial. As consequence, the GIT quotient $V//G$ is a smooth projective variety. In the interest of appropriately crediting the authors, the theorems below also hold in some more general settings, but we restrict to this one for simplicity, since we only consider smooth projective targets. \\

 In this situation, we have a surjective Kirwan map, $k_G:K_G^*(V)\to K^*(V//G)$, which corresponds to descent of $G$-equivariant bundles to the quotient, and gives a convenient way of describing $K$-theory classes on $V//G$. In particular, beginning with a trivial bundle on $V$ with a $G$-representation, this descends to a bundle on $V//G$ of the same rank. The Kirwan map respects this descent. 

For $T_G$ a maximal torus in $G$, we can also consider the GIT quotient $V//T_G$ (note that there may be more $T_G$-stable points than $G$-stable ones). The Weyl group $W$ has a natural action on $V//T_G$, and thus $K^*(V//T_G)$. This action makes the Kirwan map $k_T$ $W$-equivariant. Each root $\alpha$ of $G$ determines a character of $T_G$, whose action on $V\times \mathbb{C}$ determines a equivariant bundle on $V$, and thus a line bundle $L_\alpha$ on $V//T_G$. \\

 The abelian/non-abelian correspondence for $K$-theory relates $K^*(X//G)$ and $K^*(X//T_G)$ in the following way:

\begin{thm}[Harada-Landweber, \cite{harada}]
\hfill
\begin{itemize}
\item $K^*(V//G)\cong \frac{K^*(V//T_G)^W}{ann(Eu(\bigoplus_a L_a))}$. We refer to this isomorphism by $\phi$
\item For $\alpha\in K^*(V//T_G)$, $\frac{1}{|W|}\chi(V//T_G;Eu(\bigoplus_a L_a)\alpha)=\chi(V//T_G; sp(\alpha))$.
\item $\phi$ makes the following diagram commute:
$
\xymatrix{
K^*_G(V)\ar[r]^{res}\ar[d]^{k_G} &K^*_{T_G}(V)^W\ar[d]^{k_{T_G}}\\ K^*(X//G) & K^*(X//T_G)\ar[l]^\phi}$

Where $res$ denotes equivariant restriction. 
\end{itemize}

\end{thm}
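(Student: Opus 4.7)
The plan is to lift Martin's cohomological abelian/non-abelian correspondence to the $K$-theoretic setting. The three bullets are naturally proved together, with the commutative diagram serving as a compatibility that feeds into the other two. My strategy would be to prove the compatibility first, then use it to reduce the integration formula to a flag-bundle pushforward computation, and finally establish the isomorphism by combining surjectivity with a careful analysis of the kernel.

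The commutative diagram in the third bullet is essentially formal: both $k_G$ and $k_{T_G}$ are defined by restricting to the appropriate GIT semistable locus and descending, and the inclusion $V^{ss}(G) \subset V^{ss}(T_G)$ together with restriction from $G$-equivariant to $T_G$-equivariant classes makes the square commute. The geometric object encoding this is the $G/T_G$-flag bundle $\pi : V^{ss}(G)/T_G \to V//G$. For the integration formula, I would first identify $V^{ss}(G)/T_G$ with an open subset of $V//T_G$ whose complement makes no contribution to the relevant holomorphic Euler characteristic, and then apply the $K$-theoretic pushforward along $\pi$. Pushforward along a $G/T_G$-bundle involves Weyl-symmetrization divided by the Euler class of the relative tangent bundle, whose line summands are precisely the $L_\alpha$ for positive roots, yielding the claimed formula; the factor $1/|W|$ accounts for the free Weyl action on the generic fiber.

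The main isomorphism is then assembled as follows. Surjectivity of $\phi$ follows by combining surjectivity of the non-abelian Kirwan map $k_G$ with surjectivity of the restriction $K_G^*(V) \to K_{T_G}^*(V)^W$ and the commutative diagram just established. For the kernel, the integration formula gives one direction immediately: any class in the annihilator of $\mathrm{Eu}(\bigoplus_\alpha L_\alpha)$ pairs trivially with every $W$-invariant class on $V//G$ and therefore lies in $\ker \phi$. The reverse containment requires identifying classes in $\ker \phi$ as being supported on the difference $V^{ss}(T_G) \setminus V^{ss}(G)$ and showing that this locus is cut out scheme-theoretically by $\mathrm{Eu}(\bigoplus_\alpha L_\alpha)$.

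I expect this last step to be the main obstacle. The natural tool is a Morse-theoretic stratification of the extra unstable locus $V^{ss}(T_G) \setminus V^{ss}(G)$, where each stratum is characterized by a destabilizing one-parameter subgroup whose weights interact with the roots of $G$ to produce factors of the $L_\alpha$. In cohomology the corresponding argument is dimension-based, but in $K$-theory one needs either Thomason-style localization or an equivariant Atiyah--Bott calculation to track how classes behave along each stratum, since one cannot simply count dimensions. This is where the bulk of the technical work lies and where the specific machinery of Harada--Landweber would be essential; once this identification of $\ker \phi$ is in hand, the first bullet follows by the first isomorphism theorem applied to $\phi$.
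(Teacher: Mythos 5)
First, a point of orientation: the paper does not prove this statement at all. It is quoted verbatim from Harada--Landweber (the paper even remarks that their actual main result is a different, integral formulation, and that the version stated here is the one convenient for the quantum generalization). So there is no in-paper proof to compare against; what follows assesses your sketch on its own terms. Your overall architecture --- commutative diagram first, then the integration formula via the flag bundle $\pi: V^{ss}(G)/T_G \to V//G$, then the isomorphism from surjectivity plus a kernel computation --- is the correct Martin-style strategy and is essentially how the $K$-theoretic statement is established.

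The genuine gap is in the step you pass over most quickly: the assertion that $V^{ss}(G)/T_G$ is an open subset of $V//T_G$ ``whose complement makes no contribution to the relevant holomorphic Euler characteristic.'' Openness is correct, but that is exactly why the claim is not automatic: a holomorphic Euler characteristic is a global invariant and cannot be computed on an open subset by discarding a closed complement, no matter its codimension. In Martin's cohomological argument the analogous step is handled because $e(\bigoplus_a L_a)$ is Poincar\'e dual to the (closed, in the symplectic picture) locus $\mu_G^{-1}(0)/T_G$; the $K$-theoretic replacement is that $\mathrm{Eu}(\bigoplus_a L_a)$ must be identified with the class of a Koszul resolution of the structure sheaf of that locus (coming from a section of $\bigoplus_a L_a$ vanishing transversally there), or else one must run a Teleman/Thomason-type vanishing argument on the unstable strata to show they contribute nothing once the Euler class is inserted. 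This is precisely the ``equivariant Atiyah--Bott calculation'' you correctly flag as the hard part --- but you defer it to the kernel step, when in fact it is already needed to prove the second bullet. Relatedly, your accounting of the Euler class is off: the relative tangent bundle of $\pi$ only produces the positive roots, while $\bigoplus_a L_a$ runs over all roots; the other half comes from the Koszul/normal-direction contribution just described, and getting the factor $|W|$ right depends on tracking both halves.

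A smaller structural comment: once the integration formula and surjectivity are in hand, the identification $\ker\phi=\mathrm{ann}(\mathrm{Eu}(\bigoplus_a L_a))$ follows in \emph{both} directions by a formal duality argument --- the Euler pairing on a smooth projective variety is nondegenerate over $\mathbb{Q}$, it is $W$-invariant on $V//T_G$, so distinct isotypic components are orthogonal and the pairing restricted to $W$-invariants is again nondegenerate; hence $\phi(\alpha)=0$ iff $\mathrm{Eu}\cdot\alpha$ pairs trivially with all of $K^*(V//T_G)^W$ iff $\mathrm{Eu}\cdot\alpha=0$. The separate ``scheme-theoretic'' analysis of $V^{ss}(T_G)\setminus V^{ss}(G)$ that you propose for the reverse containment is therefore unnecessary, and in any case is left entirely unexecuted. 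In short: the skeleton is right, but the single piece of real analytic/geometric content has been placed in the wrong step and is the one thing the sketch does not actually supply.
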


The meaning of the final bullet point is that any $G$-bundle $E_G$ on $X$ determines a class in $K^*(X//G)$, but also a $T_G-bundle$ $E_{T_G}$, and thus a class in $K(X//T_G)$. $\phi$ maps the latter class to the former. 

$k_G$, by virtue of being a Kirwan map, is surjective. However $k_{T_G}$ a priori may not also be surjective, since we have restricted to $W$-invariants both the source and the target. However, since we work over $\mathbb{Q}$, we can average non $W$-invariant preimages to produce a $W$-invariant one. So we can treat all elements of $K^*(X//T_G),K^*(X//G)$ as coming from equivariant classes on $X$. Harada-Landweber's main result is a different formulation of this one, that makes sense over $\mathbb{Z}$, however, the formulation we give here is the one that generalizes most naturally to the quantum context.

\section{The Quantum Abelian/non-Abelian Correspondence}
We state here a generalization to quantum $K$-theory of Harada-Landweber's abelian/non-abelian correspondence. To do this, we require one additional hypothesis on $V//G$, that $G$-unstable locus be of codimension two. Under this hypothesis, classical abelian/non-abelian correspondence for cohomology can be used to identify the curve classes on $V//G$ with the Weyl coinvariants of the curve classes on $V//T_G$, giving a natural map from the Novikov variables of $V//T_G$ to those of $V//G$. Extending $\phi$ by this map gives a map $\phi_Q:K(X//T)^W[[Q^i_j]]\to K(X//G)[[Q_i]]$. \\

The following conjecture, introduced in \cite{me}, gives a quantum generalization of the $K$-theoretic abelian/non-abelian correspondence. 
\begin{conj}
\label{abconj}
    $\phi_Q$ is a surjective ring homomorphism from $QK^{tw}(V//T_G)^W$ to $QK(V//G)$

    Where the twisting $tw$ is determined by $Eu_{\lambda}(\bigoplus_r L_r)$, here $Eu_{\lambda}$ is the $\mathbb{C}^*$-equivariant Euler class with respect to the action scaling the fibers.\\

    Furthermore, $\phi_Q$ respects quantum pairings in the following sense:

    $$\frac{1}{|W|}((a,b))^{tw}=((\phi_Q(a),\phi_Q(b)))$$

\end{conj}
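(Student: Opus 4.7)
The plan is to prove Conjecture \ref{abconj} by bootstrapping from the classical Harada-Landweber correspondence using the $J$-function machinery of Section 2. The core idea is that, although we cannot directly compare the stable-map moduli spaces of $V//T_G$ and $V//G$, the small $J$-functions on each side do admit a clean comparison, and the Iritani-Milanov-Tonita theorem then propagates that comparison to the level of the quantum rings.

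First I would define $\phi_Q$ on Novikov rings via the identification $H_2(V//G) \cong H_2(V//T_G)_W$ (this is where the codimension-two unstable locus hypothesis is used), and extend the Harada-Landweber $\phi$ $R$-linearly to $K_T^*(V)^W[[Q^i_j]] \to K^*(V)[[Q_i]]$ through the Kirwan descent. The nontrivial content is showing that this map sends quantum products to quantum products. The key intermediate step is the $J$-function identity
\begin{equation*}
\phi_Q\bigl(J^{tw}_{V//T_G}\bigr) \; = \; J_{V//G}
\end{equation*}
after matching Novikov variables, which is the $K$-theoretic analogue of the Bertram-Ciocan-Fontanine-Kim abelian/nonabelian correspondence in cohomology. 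The twisting by $\prod_r Eu_{\lambda_r}(L_r)$ is forced on us here: under the scaling $\mathbb{C}^*$-action on the $L_r$, it is exactly the factor that cancels the contribution of the root directions along a common moduli space, and the condition $\lambda_r\lambda_{-r}=1$ reflects the pairing of positive and negative roots.

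Once the $J$-function identity is established, the ring homomorphism statement follows from the Iritani-Milanov-Tonita theorem and its twisted version: any $q$-difference operator annihilating $\tilde J^{tw}_{V//T_G}$ also annihilates $\tilde J_{V//G}$, and hence produces compatible relations in $QK^{tw}(V//T_G)^W$ and $QK(V//G)$; applying this in combination with the quantum triviality corollaries shows $\phi_Q$ sends the generating set of quantum-multiplication operators to quantum-multiplication operators on the $V//G$ side. Surjectivity is then immediate from the $K$-theoretic Nakayama lemma: the ideal is the Novikov ideal $\langle Q\rangle$, and modulo $Q$ the map $\phi_Q$ reduces to the classical Harada-Landweber $\phi$, which is surjective, so provided $QK(V//G)$ is free of the correct rank over $\Lambda[[Q]]$ (which it is, by Y.-P.~Lee), the Nakayama corollary from Section 3 finishes the job.

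For the pairing identity $\frac{1}{|W|}((a,b))^{tw} = ((\phi_Q(a),\phi_Q(b)))$, the plan is to replay the Harada-Landweber calculation at the level of the two-point Gromov-Witten correlator. Using $K$-theoretic virtual localization with respect to the scaling $\mathbb{C}^*$-action on $\bigoplus_r L_r$, the twisted two-point invariant on $V//T_G$ decomposes into fixed-locus contributions whose virtual normal bundles are precisely those cancelled by $\prod_r Eu_{\lambda_r}(L_r)$; after $W$-symmetrization this recovers the corresponding correlator on $V//G$, up to the factor $|W|$ accounting for the redundancy of the symmetrization. The hardest step will be the $J$-function comparison: cohomologically this was done via quasimap spaces by Ciocan-Fontanine-Kim-Sabbah, but in $K$-theory the difference between quasimap and stable-map invariants noted in the Introduction (by Liu and Xiaohan) means extra care is required to ensure we land in the correct theory; I would attempt this via Ciocan-Fontanine-Kim-Sabbah $\epsilon$-stable quasimap wall crossing combined with the known agreement in the compact (non-cotangent) setting, which is exactly the regime where Conjecture \ref{iso} is expected to hold.
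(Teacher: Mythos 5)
The first thing to note is that the paper does not prove this statement at all: it is stated as a conjecture, and only the special case $X=Fl(v_1,\dots,v_n;N)$ is claimed, with the proof deferred to the author's earlier work, which in turn rests on Yan's theorem that the Weyl-invariant part of the \emph{twisted Lagrangian cone} of $V//T_G$ is related to the cone of $V//G$. So you are attempting something strictly stronger than anything established here, and your sketch has to be judged as an attack on an open conjecture rather than a reconstruction of an existing argument.

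On the merits there are genuine gaps. First, your ``key intermediate step'' $\phi_Q\bigl(J^{tw}_{V//T_G}\bigr)=J_{V//G}$ is essentially the whole content of the conjecture and is asserted rather than argued; moreover it is not even the right statement. What is actually available (for flags, from Yan) is a statement about symmetrized Lagrangian cones: one shows the twisted $I$-type series lies on $\mathcal{L}^{tw,sym}$, and identifying \emph{which} point of the cone it is requires the pole-at-roots-of-unity and $q$-degree analysis that the paper carries out at length in Section 7; the small $J$-functions do not simply correspond under $\phi_Q$. Second, the Iritani--Milanov--Tonita theorem converts $q$-difference operators annihilating $\tilde{J}$ into \emph{relations} in the quantum $K$-ring; it does not show that a given additive map is a ring homomorphism. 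Having matching relations on both sides only determines the ring structure once you know both sets of relations are complete presentations, which is circular at this stage (and the deformed operators $A_{i,com}$ must be trivialized on each side separately via quantum triviality, which requires the explicit degree bounds on $J_d$). Third, your localization argument for the pairing identity does not parse: the scaling $\mathbb{C}^*$ acts on the fibers of the $L_r$ and trivially on $V//T_G$ itself, so it induces no fixed-locus decomposition of the stable-map moduli space -- the twisting is a modification of $\mathcal{O}^{vir}$, not a localization setup. The pairing comparison is precisely where one must compare virtual structure sheaves on the genuinely different moduli spaces of stable maps to $V//T_G$ and to $V//G$, and that comparison (via quasimap graph spaces and wall-crossing) is the hard geometric input, not a formal consequence of Harada--Landweber. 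Your surjectivity-mod-$Q$ observation is fine, but it is the easy part.
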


In \cite{me}, this conjecture was proven in our case of interest, based on a corresponding result of Yan in \cite{Xiaohanflag} for the range of the big symmetrized $J$-functions:
\begin{thm}
    Conjecture \ref{abconj} is true for $X=Fl(v_1,\dots,v_n;N)$. 
\end{thm}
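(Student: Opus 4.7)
The plan is to reduce the assertion about small quantum $K$-rings to Yan's statement about overruled Lagrangian cones in \cite{Xiaohanflag}, which records the full genus-zero K-theoretic Gromov-Witten data. The twisted Lagrangian cone of $V//T_G$ and the untwisted cone of $V//G$ are the natural objects where the abelian/non-abelian comparison lives; once the cone-level statement is in place, the small quantum $K$-ring is extracted from the $J$-function and explicit relations are produced via the theorems of Iritani-Milanov-Tonita and Anderson-Chen-Tseng-Iritani recalled in Section 2.

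First I would invoke Yan's theorem to obtain a family of points on the twisted Lagrangian cone of $V//T_G$ whose Weyl-averaged image under $\phi_Q$ lies on the cone of $V//G$. This is the $K$-theoretic analog of the Bertram-Ciocan-Fontanine-Kim-Sabbah identity, and projecting to the small cones yields a comparison of small $J$-functions: $\phi_Q$ applied to the Weyl-invariant part of the twisted small $J$-function of $V//T_G$ agrees with $J_{V//G}$ up to the appropriate normalization and change-of-Novikov variables coming from the codimension-two hypothesis on the unstable locus.

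Next, to promote this to a ring homomorphism, I would use the fact that any $q$-difference operator $F(q^{Q_i \partial_{Q_i}}, q, Q)$ annihilating the twisted $\tilde{J}$ produces a relation $F(A_{i,com}^{tw}, 1, Q) = 0$ in $QK^{tw}(V//T_G)$, and analogously on $V//G$. The cone-compatibility then propagates every relation on the abelian side to a relation on the non-abelian side, yielding a well-defined map of $\mathbb{Q}[[Q]]$-algebras $\phi_Q : QK^{tw}(V//T_G)^W \to QK(V//G)$. Surjectivity follows from the $K$-theoretic Nakayama lemma: modulo the maximal ideal in the Novikov variables, $\phi_Q$ specializes to the classical surjection $\phi$ of Harada-Landweber, and both sides are free modules of finite rank over $\Lambda[[Q]]$. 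The pairing identity $\tfrac{1}{|W|}((a,b))^{tw} = ((\phi_Q(a),\phi_Q(b)))$ is reduced to a cone-level statement, then extracted from the virtual localization formula on moduli of stable maps together with the classical pairing identity in the second bullet of Harada-Landweber.

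The hardest step is the translation from the infinite-dimensional cone picture to the finite-dimensional ring statement: one must verify that the Weyl-invariant part of the twisted small $J$-function determines the twisted small quantum $K$-ring in a manner compatible with $\phi_Q$, and in particular that the commuting deformations $A_{i,com}^{tw}$ on the abelian side match the corresponding operators on the non-abelian side under $\phi_Q$. This requires careful control of the twisting class $\prod_r Eu_{\lambda_r}(L_r)$, which is only invertible after adjoining the equivariant parameters $\lambda_r$ and passing to a limit, as well as of the Weyl-averaging step that trades the abelian Novikov variables $Q^i_j$ for the Weyl-coinvariant variables $Q_i$ on $V//G$.
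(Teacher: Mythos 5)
This theorem is not proved in the paper at all: the text simply states that it was established in the author's prior work \cite{me}, building on Yan's Lagrangian-cone result in \cite{Xiaohanflag}, and no argument is reproduced here. So there is nothing in this paper to compare your proposal against line by line; the most one can say is that your overall strategy (start from Yan's statement that the relevant $J$-function/cone data of the abelianization maps to that of $Fl$, then descend to the small quantum $K$-ring) is consistent with the one-sentence attribution the paper gives.

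That said, your proposal has a genuine gap at its central step. You claim that once the small $J$-functions are matched, ``cone-compatibility then propagates every relation on the abelian side to a relation on the non-abelian side, yielding a well-defined map of algebras.'' Matching relations in two rings does not produce a ring homomorphism between them: to show $\phi_Q(a*b)=\phi_Q(a)*\phi_Q(b)$ for arbitrary classes $a,b$ you must compare the structure constants, i.e.\ the twisted $2$- and $3$-point correlators on $V//T_G$ with the untwisted ones on $V//G$. The small $J$-function records only one-pointed descendant invariants, so the projection to small cones that you propose cannot by itself determine the product; and the Iritani--Milanov--Tonita/ACTI machinery you invoke only yields \emph{relations} among the $A_{i,com}$, not the multiplicativity of $\phi_Q$ on all of $K^*$. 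The pairing identity $\tfrac{1}{|W|}((a,b))^{tw}=((\phi_Q(a),\phi_Q(b)))$, which you treat as a separate final verification, is actually the engine of the whole argument: one needs its three-point analogue (the quantum deformation of Harada--Landweber's integration formula $\tfrac{1}{|W|}\chi(V//T_G;Eu(\oplus_a L_a)\alpha)=\chi(V//G;\phi(\alpha))$ applied degree by degree on the moduli of stable maps), after which the homomorphism property follows formally from nondegeneracy of the quantum pairing. Establishing that correlator-level identity from the big cone statement is precisely the hard content deferred to \cite{me}, and your sketch does not supply it. Your surjectivity argument via Nakayama and the reduction modulo $Q$ to the classical Harada--Landweber surjection is fine.
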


However, to actually apply this theorem, we need to understand the geometry of $Y$, the abelianization of the flag. 

\section{The Flag Variety and its Abelianization}

The standard description of the type $A$ flag variety is as a quotient of the Lie group $SL(n,\mathbb{C})$ by a parabolic subgroup. However the most useful description for us is a different one, coming from geometric invariant theory.

The partial flag variety $Fl(v_1,\dots,v_n; N)$ is given as the following GIT quotient $V//_\theta G$, where:

\begin{itemize}
    \item $V=\prod_{i=1}^{n-1}Hom(\mathbb{C}^{v_i},\mathbb{C}^{v_{i+1}})\times Hom(\mathbb{C}^{v_n},\mathbb{C}^N)$. It is a $v_1v_2+\dots Nv_{n}$-dimensional space, with coordinates labelled $z^a_{b,c}$, where $a$ denotes the choice of matrix, and $b,c$ denote the entry. It is also acted on by the \textit{large} torus $$\mathbb{T}:=(\mathbb{C}^*)^{v_1v_2+\dots Nv_n}$$
    Whose lie algebra is described by coordinates $x^a_{b,c}$. 

    \item $G=\prod_i GL(v_i)$, where an element $(g_1,\dots, g_n)$ acts on a set of matrices $(M_1,\dots,M_n)$ by sending it to $(g_1M_1g_1^{-1},\dots, M_n g_n^{-1})$.
    \item $\theta$ is the character of $G$ given by taking the determinant on each factor, which defines the stability condition which selects sets of matrices that define injective linear maps. 
\end{itemize}

$$\prod_{i=1}^{n-1}Hom(\mathbb{C}^{v_i},\mathbb{C}^{v_{i+1}})\times Hom(\mathbb{C}^{v_n},\mathbb{C}^N)//_{\theta}\prod_i GL(v_i)$$

We recall some facts about the geometry of $Fl(v_1,\dots,v_n;N)$, which we henceforth denote $X$. 

The bundles $\mathbb{C}^{v_i}$ on $V$ descend to bundles $\mathcal{S}_i$ on $X$, whose fiber over a point is the $i$th vector space in the flag that point represents. We make the convention that $\mathcal{S}_{n+1}=\mathbb{C}^{N}$, the trivial bundle of rank $N$ on $X$. \\

We also define the successive quotient bundles $\mathcal{R}_i$ as $\mathcal{S}_{i}/\mathcal{S}_{i-1}$.
Letting $\Lambda_y(E)$ denote the class $\sum_i y^i \wedge^i E$, the Whitney formula gives us the following relations in $K^*(X)$

\begin{equation}
    \label{whit}
    \Lambda_y(\mathcal{S}_i)\Lambda_y(Q_{i+1})=\Lambda_y(\mathcal{S}_{i+1})
\end{equation}

The ring $K^*(X)$ is generated by the classes $\wedge^j\mathcal{S}_i$ and determined entirely by the relations \eqref{whit}, which we henceforth refer to as the \emph{Whitney relations}. 

Rather than just working $K^*(X)$, we work torus equivariantly. The standard torus action of $T^N$ on $\mathbb{C}^N$ induces an action on $X$. If $\Lambda_i$ represent the standard representations of each factor of $T^N$, then in $K^T(X)$, we have:

$$\mathcal{S}_{n+1}=\mathbb{C}^N=\sum_i \Lambda_i$$

After taking this into account, the Whitney relations \eqref{whit} also give a complete description of $K^T(X)$.

\subsection{The Abelianization of the Flag}
The maximal torus $T_G$ inside $G=\prod_{i=1}^n GL(v_i)$ to be the set of elements where each matrix has no off-diagonal entries, determines a new variety $Y:=V//_\theta T_G$, whose geometry we describe in this section. 

Since the final summand of $V$ is $Hom(\mathbb{C}^{v_n}, \mathbb{C}^N)$, and it is only acted on by the torus in $Gl(v_n)$, $Y$ is fibered over the quotient $Hom(\mathbb{C}^{v_n},\mathbb{C}^N)//Diag(v_n)\cong (\mathbb{C}P^{N-1})^{v_n}$. \\

By a similar argument, we find that $Y$ is a tower of projective bundles, 
$$F_1=Y\to F_2\to F_3\dots \to F_n\to  F_{n+1}=(\mathbb{C}P^{N-1})^{v_n}$$

Where $F_i\to F_{i+1}$ is $(\mathbb{C}P^{v_i-1})^{v_{i-1}}$ bundle.  \\

Let $P^i_j$ denote the $j$th tautological bundle $\mathcal{O}(-1)$ in the fiber of $F_{i-1}$, and $p^i_j:=-c_1(P^i_j)$, then the above description gives us the following facts:

\begin{itemize}
    \item $P^i_j$ generate $K^*(Y)$
    \item The duals to $p^i_j$ are effective curve classes and generate $H_2(Y,\mathbb{Z})$. In fact, their positive span generates the Mori cone. We will use these to index the Novikov variables $\mathbb{Q}^i_j$ for $Y$. 
    \item The Weyl group $W$ is $\prod_i S_{\nu_i}$, and permutes the bundles $P^i_j$. The bundles associated to the simple roots are $\frac{P^i_j}{P^i_k}$. 
    
\end{itemize}

This description also determines the map $\phi$. $\mathcal{S}_i$ and $\bigoplus_j P^i_j$ are both bundles determined from the same $G-$representation, $Gl(v_i)$ acting on $\mathcal{C}^{v_i}$, thus they are related by $\phi$. Similarly, any symmetric function of the Chern roots of $\mathcal{S}_i$ is the image of the same function of the bundles $P_j^i$. This identification determines the entire map $\phi$, and justifies our abuse of notation in the introduction identifying $P_j^i$ with the corresponding Chern roots.\\

The map on Novikov variables is simply $Q^i_j\mapsto Q_i$. 

\subsection{Toric Description}
To obtain further information about the geometry of $Y$, we use the fact that it is a toric variety (this is a consequence of a being a GIT quotient of a linear space by a torus). \\

The action of the \emph{large torus} $\mathbb{T}=(\mathbb{C}^*)^M$ on $V$ induces a corresponding action on $Y$, which gives the standard torus action on a toric variety. This procedure is explained in \cite{givtor}, where the secondary fan is referred as to the \say{picture}. We will describe the results of the procedure here, and then explain what this says about the geometry of $Y$.

From the description of $Y$ as a GIT quotient, we can construct the \emph{secondary fan} of $Y$, inside $H^2(Y,\mathbb{R})$ with an integral basis of $p^i_j$, for $1\leq i\leq n$ and $1 \leq j\leq v_i$. It is generated by the rays $u^i_{j.k}=p^{i}_j-p^{i+1}_k$, each corresponding to a $\mathbb{T}-$invariant divisor. Here we make the convention that $p^{n+1}_j=0$. 

The Kahler cone is the intersection of all maximal cones containing the stability condition (which is realized in $H^2(Y)$ as the first Chern class of the line bundle induced by the character $\theta$, and corresponds to the point $(1,1,\dots,1)$). In this case, it is precisely the positive span of the $p^i_j$. \\

A maximal cone $\sigma$ of the secondary fan has two interpretations, depending on if $\theta\in \sigma$ or not. If $\theta\in \sigma$, the cone determines an isolated fixed point of the $\mathbb{T}$-action (all fixed points come from such cones). \\

Otherwise, the divisors determined by the rays of the cone have empty intersection. Given a cone $\sigma$ denote $r(\sigma)$ the set of rays. 

The linear relations between the $u^i_{j,k}s$ and the relations determined by cones not containing $\theta$, collectively known as the Kirwan relations, determined $H^\mathbb{T}(Y)$. A similar presentation holds for $K^*(Y)$, using the line bundles $P^i_j, U^i_{j,k}$ whose first Chern classes are $-p^i_j, -u^i_{j,k}$ respectively. If $\alpha$ is a ray in the secondary fan, let $U_\alpha$ denote the corresponding line bundle (it will be $U^{i}_{j,k}$ where $\alpha=u^{i}_{j,k})$. Let the equivariant parameters corresponding to the $\mathbb{T}$-action be denoted $\Lambda^i_{j,k}$, then:

$$K^\mathbb{T}(Y)=\mathbb{C}[P^i_j] / \langle U^{i}_{j,k}=\frac{P^{i}_{j}}{\Lambda^{i}_{j,k}P^{i+1}_k},\prod_{\alpha\in r(\sigma)\text{ $\theta\notin \sigma$ } }U_\alpha=0\rangle$$

The final ingredient we need here is how to do fixed-point localization on $Y$. A fixed point is determined by a cone $\sigma$ containing $\theta$. The localization of a class is determined by setting $U_\alpha=1$ for $\alpha\in \sigma$. Doing this determines the images of $P^{i}_{j}$, which determine the images of the other $U_\alpha$.

A maximal cone containing $\theta$ must be of the following form:

For each $1\leq i\leq n-1$, choose some injective function 

$$f_i:\\{1,\dots,v_i\\} \to \\{1,\dots,v_{i+1}\\}.$$

Let $S_i$ denote the set of vectors $p^i_j-p^{i+1}_{f_i(j)}$, with $S_{n}$ corresponding to the set of $p^n_j$. Then for some choice of $f_i$s, any maximal cone has rays given by $\bigcup_i S_i$. These cones are acted on transitively by the Weyl group. 

We choose a distinguished fixed point $\tilde{A}$, corresponding to the cone determined by $f_i(k)=k$.

We can actually simplify this picture. Since $Y$ is a GIT quotient of $\mathbb{C}^M$ by $T_G$, and the action of $\mathbb{T}$ is induced from the action on $\mathbb{C}^M$, we can equivalently consider the action by the quotient $\tilde{T}=\mathbb{T}/T_G$. This is the standard torus in the theory of toric varieties. Restricting to this quotient is equivalent to letting $\Lambda^{i}_{j,k}=\Lambda^i_{s,k}=\Lambda^{i}_{k}$ for all $j,s$. 

From the perspective of matrices, $\Lambda^{i}_{j,k}$ corresponds to scaling the $k,j$th element of the $i$th matrix, and $\Lambda^{i}_{k}$ corresponds to scaling the entire $k$th row. 

Localizing with respect to the $\tilde{T}-$action corresponds to setting $P^i_j/P^{i+1}_{f_i(j)}$ to $\Lambda^{i}_{f_i(j)}$, and specializing other variables accordingly. At the fixed point $\tilde{A}$, this sends $P^i_j/P^{i+1}_j$ to $\Lambda^i_j$. 

The $\tilde{T}$-action specializes to the $T$-action coming from $\mathbb{C}^N$ by sending $\Lambda^{n-1}_{r}\to \Lambda_r$ and $\Lambda^{k}_{j}\to 1$ for all other $k$.

\subsection{Proof Strategy}
With these preliminaries established, our proof strategy is as follows. We first write down a function that is a deformation of the twisted small $J$-function of $Y$ and use it to obtain a presentation for $QK^{tw}(Y)$, with similarly deformed relations. 

From there, we find appropriate Weyl-symmetrizations of relations in $QK^{tw}(Y)$, and compute how they specialize to $X$, after specializing to $X$, the deformations we introduced before will vanish, so our resulting relations are honest relations in $QK(X)$.

This will recover a stronger variant of conjecture \ref{whitconj}. We show that this variant implies the Rimanyi-Varchenko-Tarasov presentation. Subsequently, we use the results to give an isomophism between $QK(Fl)$ and $QK^{Fl}(QM)$.

\section{$QK^{tw}(Y)$}
\subsection{(Almost) The Twisted $J$-Function}
We can now calculate a presentation for $QK^{tw}(Y)$, and use an appropriate symmetrization to obtain the Whitney relations. To do this, we use the following value of the $S_n$-equivariant twisted $J$-function of $Y$, obtained by Yan in \cite{Xiaohan}, which we denote $\tilde{J}_{tw}^Y$ it is described the the following expression:

\begin{remark}
    For the sake of brevity, and since this is the only value we need the argument, we do not define  The reader is invited to refer to \cite{me} or \cite{Xiaohan} for more details. 
\end{remark}

\begin{equation}\label{jf}
\resizebox{\hsize}{!}{$
(1-q)\sum_{d\in \mathcal{D}} \prod_{i,j} (Q^i_{j})^{d^i_{j}}\frac{\big(\prod_{i=1}^{n-1}\prod_{1\leq r\leq v_{i+1}}^{1\leq s\leq v_i}\prod_{l=-\infty}^{0}(1-\frac{P^i_{s}}{\Lambda^{i}_{r}P^{i+1}_r}q^l)\cdot\prod_{1\leq r\leq N}^{1\leq s\leq v_n}\prod_{l=-\infty}^{0}(1-\frac{P^n_{s}}{\Lambda^{n}_{r}q^l})\big)\prod_{i=1}^n\prod_{r\neq s}^{1\leq r,s\leq v_i}\prod_{l=-\infty}^{d^i_{s}-d^i_{r}}(1-\lambda\frac{P^i_{s}}{P^i_{r}}q^l)}{\big(\prod_{i=1}^{n-1}\prod_{1\leq r\leq v_{i+1}}^{1\leq s\leq v_i}\prod_{l=-\infty}^{d^i_{s}-d^{i+1}_r}(1-\frac{P^i_{s}}{\Lambda^{i}_{r}P^{i+1}_r}q^l)\cdot\prod_{1\leq r\leq N}^{1\leq s\leq v_n}\prod_{l=-\infty}^{d^n_{s}}(1-\frac{P^n_{s}}{\Lambda^{n}_{r}q^l})\big)\prod_{i=1}^n\prod_{r\neq s}^{1\leq r,s\leq v_i}\prod_{l=-\infty}^{0}(1-\lambda\frac{P^i_{s}}{P^i_{r}}q^l)}$}
\end{equation}
 We note that this expression is rather cumbersome. If we simplify the notation by defining the modified product $$\widetilde{\prod}_{i=1}^k f_k:=\frac{\prod_{i=-\infty}^k f_k}{\prod_{i=-\infty}^0 f_k}$$

    Then 

    $$\tilde{J}_Y^{tw}=(1-q)\sum_{d\in \mathcal{D}} \prod_{i,j} (Q^i_{j})^{d^i_{j}}\frac{\prod_{i=1}^n\prod_{r\neq s}^{1\leq r,s\leq v_i}\widetilde{\prod}_{l=1}^{d^i_{s}-d^i_{r}}(1-\lambda\frac{P^i_{s}}{P^i_{r}}q^l)}{\prod_{i=1}^{n-1}\prod_{1\leq r\leq v_{i+1}}^{1\leq s\leq v_i}\widetilde{\prod}_{l=1}^{d^i_{s}-d^{i+1}_r}(1-\frac{P^i_{s}}{\Lambda^{i}_{r}P^{i+1}_r}q^l)\cdot\prod_{1\leq r\leq N}^{1\leq s\leq v_n}\widetilde{\prod}_{l=1}^{d^n_{s}}(1-\frac{P^n_{s}}{\Lambda^{n}_{r}q^l})}$$

 Since we are only interested in the $T$-equivariant theory, rather than the $\tilde{T}$-equivariant theory, we make the specialization $\Lambda^i_j\to 1$ for $i<n$, and $\Lambda^n_j\to \Lambda_j$. Call the resulting function $\overline{J}^{tw}_Y$, we have:

\begin{equation}\label{js1}
\resizebox{\hsize}{!}{$
\overline{J}^{tw}_Y=(1-q)\sum_{d\in \mathcal{D}} \prod_{i,j} (Q^i_{j})^{d^i_{j}}\frac{\big(\prod_{i=1}^{n-1}\prod_{1\leq r\leq v_{i+1}}^{1\leq s\leq v_i}\prod_{l=-\infty}^{0}(1-\frac{P^i_{s}}{P^{i+1}_r}q^l)\cdot\prod_{1\leq r\leq N}^{1\leq s\leq v_n}\prod_{l=-\infty}^{0}(1-\frac{P^n_{s}}{\Lambda_{r}q^l})\big)\prod_{i=1}^n\prod_{r\neq s}^{1\leq r,s\leq v_i}\prod_{l=-\infty}^{d^i_{s}-d^i_{r}}(1-\lambda\frac{P^i_{s}}{P^i_{r}}q^l)}{\big(\prod_{i=1}^{n-1}\prod_{1\leq r\leq v_{i+1}}^{1\leq s\leq v_i}\prod_{l=-\infty}^{d^i_{s}-d^{i+1}_r}(1-\frac{P^i_{s}}{P^{i+1}_r}q^l)\cdot\prod_{1\leq r\leq N}^{1\leq s\leq v_n}\prod_{l=-\infty}^{d^n_{s}}(1-\frac{P^n_{s}}{\Lambda_{r}q^l})\big)\prod_{i=1}^n\prod_{r\neq s}^{1\leq r,s\leq v_i}\prod_{l=-\infty}^{0}(1-\lambda\frac{P^i_{s}}{P^i_{r}}q^l)}$}
\end{equation}

$\overline{J}^{tw}_Y$ satisfies the following system of $q-$difference equations, one for each appropriate value of $i,j$. We make the convention that $P^n_j=\Lambda_j$, $v_n=N$, $v_0=0$.

$$\prod_{k\neq j} (1-\Lambda q\frac{P^i_k}{P^i_j}q^{Q^i_k\partial_{Q^i_k}-Q^i_j\partial_{Q^i_j}})\prod_k(1-\frac{P^i_j}{P^{i+1}_k}q^{Q^i_j\partial_{Q^i_j}-Q^{i+1}_k\partial_{Q^{i+1}_k}})\overline{J}^{tw}_Y=$$
$$Q^i_j\prod_k(1-\frac{P^{i-1}_k}{P^{i}_j}q^{Q^{io1}_k\partial_{Q^{i-1}_k}-Q^i_j\partial_{Q^i_j}})\prod_{k\neq j}(1-\Lambda q\frac{P_j^i}{P_k^{i}}q^{Q^i_j\partial_{Q^i_j}-Q^i_k\partial_{Q^i_k}})\overline{J}^{tw}_Y$$

If $\overline{J}^{tw}_Y$ were the small twisted $J$-function of $Y$, the symbols of these operators would be relations in $QK^{tw}(Y)$, hence $W$-invariant combinations of these relations would descend to $QK(X)$. The degree bounds proved previously would imply $\hat{P}_i=P_i$. 

Although this is not in fact the case, all is not lost. By \cite[Thm 6.15]{me}, since $\phi(\overline{J}^{tw}_Y)=J_X$, applying the above procedure to $\overline{J}^{tw}_Y$ still results in relations in $QK(X)$, essentially because the symbols of the operators do not literally give relations in $QK^{tw}(Y)$, but they define relations up to terms lying in a submodule $\mathscr{I}$ (the submodule of $K(Y)[[Q]]$ generated by $\ker(\phi)$) that vanish after the application of $\phi$. 

The quantum triviality theorem also applies after replacing $J^{tw}_Y$ with $\overline{J}^{tw}_Y$ up to $\mathscr{I}$ (see \cite[Cor 4.14]{me}), but the resulting quantum deformations are trivial up to elements of $\mathscr{I}$.

Thus the symbols of the operators translate to the following \say{relations} in $QK^{tw}(Y)$, which in reality only hold up to elements of $\mathscr{I}$.

\begin{equation}
\label{eqq}
\prod_{b=1}^{v_{n+1}}\prod_k(1-\frac{P^i_j}{P^{i+1}_b})=Q_{ij}\prod_{a=1}^{v_{n-1}}(1-\frac{P^{i-1}_a}{P^{i}_j})\prod_{k\neq j}\frac{(1-\Lambda \frac{P_j^i}{P_k^{i}})}{ (1-\Lambda \frac{P^i_k}{P^i_j})}
\end{equation}

In the special cases $i=n$, $i=1$, \eqref{eqq} becomes:

\begin{equation}
\prod_{b=1}^{N}\prod_k(1-\frac{P^n_j}{\Lambda_b})=Q_{nj}\prod_{a=1}^{v_{n-1}}(1-\frac{P^{n-1}_a}{P^{n}_j})\prod_{k\neq j}\frac{(1-\Lambda \frac{P_j^n}{P_k^{n}})}{ (1-\Lambda \frac{P^n_k}{P^n_j})}
\end{equation}

\begin{equation}
\prod_{b=1}^{v_2}\prod_k(1-\frac{P^1_1}{P^2_b})=Q_{nj}
\end{equation}


\subsection{Degree Bounds}

Before proceeding by taking symmetric combinations of the above relations, we establish some asymptotic information:

First write:

$$\tilde{J}^{tw}_Y=(1-q)\sum_d Q^d J_d$$
We have:

\begin{equation}
\label{degree}
 deg(J_d)= \deg(J_d|_A)=\sum_{i=1}^n \Bigg(\sum_{j,k\leq v_i} \binom{d_{ij}-d_{ik}+1}{2}-\sum_{q\leq v_i,r\leq v_{i+1}}\binom{d_{iq}-d_{i+1,r}+1}{2}\Bigg)
\\
\end{equation}

 \text{Here $\binom{a}{2}=0$ if $a\leq 0$, and $d^{n+1}_k=0$ for any $k$.}

This allows us to conclude:

\begin{lem}
$\tilde{J}^{tw}_Y$ is equal to $1-q$, up to terms vanishing at $q=\infty$
\end{lem}

Without loss of generality, we can also assume that for each $i$, the integers $d^i_j$ are in increasing order. Having done so, we can rewrite the right hand side of \eqref{degree} as:

 $$\sum_{i} \Bigg(\big(\sum_{j,k\leq v_i} \binom{d_{ij}-d_{ik}+1}{2}-\sum_{q\leq v_i,r\leq v_{i}}\binom{d_{iq}-d_{i+1,r}+1}{2}\big)-\sum_{q\leq v_i, v_i<r\leq v_{i+1}} \binom{d_{iq}-d_{i+1,r}+1}{2}\Bigg)$$

  Define $$T_i:=\big(\sum_{j,k\leq v_i} \binom{d_{ij}-d_{ik}+1}{2}-\sum_{q\leq v_i,r\leq v_{i}}\binom{d_{iq}-d_{i+1,r}+1}{2}\big)$$
  
  and 
  $$R_i:=\sum_{q\leq v_i, v_i<r\leq v_{i+1}} \binom{d_{iq}-d_{i+1,r}+1}{2}$$
  
  $T_i\leq 0$ because whenever $d_{ij}-d_{ik}>0$, then $d_{ij}-d_{ik}\leq d_{ij}-d_{i+1,k}$.

Furthermore, $T_{n}-R_n=\sum_{i.j}\binom{d^n_i-d^n_j+2}{2}-N\sum_{i}\binom{d^n_i+1}{2}$. 

The positive part is strictly less than $v_n\sum_{i}\binom{d^n_i+1}{2}$, so unless all $d^n_j=0$, we have:

$$deg(J_d)\leq T_n-R_n< v_n-N\leq -1$$

If all the $d^n_j$ are 0, we can apply the same argument to show that $T_{n-1}<v_{n-1}-v_n$ unless all the $d^{n-1}_k$s are 0. Continuing this procedure shows that unless $d=0$: 

$$deg(J_d)<-1$$

Thus the rational function $(1-q)J_d$ has no poles at $q=\infty$, furthermore, it vanishes there.

In fact, we can give stronger bounds on the degree of $J_d$, which we will need for the purposes for using the quantum triviality theorem:

\begin{lem}
For any $i$,
$$deg(J_d)<-\sum_j d^i_{j}$$
\end{lem}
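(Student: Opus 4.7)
The strategy is to refine the degree analysis carried out in the preceding paragraphs. Writing $D_i := \sum_j d^i_j$ and using the decomposition $\deg(J_d) = \sum_k (T_k - R_k)$ with each summand non-positive, the conclusion reduces to showing $\sum_k \bigl(-(T_k - R_k)\bigr) > D_i$ for every $i$ when $d \ne 0$. I propose to establish the sharper inductive claim
\begin{equation*}
\sum_{k \geq i} \bigl(-(T_k - R_k)\bigr) \;\geq\; D_i,
\end{equation*}
with strict inequality whenever $D_j \geq 1$ for some $j \geq i$. From this the statement follows: if $D_i \geq 1$ the inequality is strict, and if $D_i = 0$ the weaker bound $\deg(J_d) < 0$ already proven above is enough.

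For the base case $i = n$, the estimate $T_n - R_n \leq (v_n - N) \sum_j \binom{d^n_j+1}{2}$ from the preceding argument, combined with $N - v_n \geq 1$ and $\binom{d+1}{2} \geq d$, yields $-(T_n - R_n) \geq D_n$. Strictness when $D_n \geq 1$ breaks into three cases: if some $d^n_j \geq 2$ then $\binom{d^n_j+1}{2} > d^n_j$ strictly; if every nonzero $d^n_j$ equals $1$ with $D_n \geq 2$ of them, the exact identity $\sum_{j,k} \binom{d^n_j - d^n_k + 1}{2} = D_n(v_n - D_n)$ gives $-(T_n - R_n) = D_n(N - v_n + D_n) > D_n$; and if $D_n = 1$ the same exact identity produces $-(T_n - R_n) = N - v_n + 1 \geq 2$, beating $D_n = 1$.

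For the inductive step, I split cases on whether $d^{i+1}$ is identically zero. In the degenerate case, the level-$i$ formula $T_i - R_i = \sum_{j,l \leq v_i} \binom{d^i_j - d^i_l + 1}{2} - v_{i+1} \sum_j \binom{d^i_j + 1}{2}$ has the same shape as the top-level formula with $N$ replaced by $v_{i+1}$, and $v_{i+1} - v_i \geq 1$ allows the base-case argument to run verbatim. Otherwise the inductive hypothesis provides $\sum_{k \geq i+1}\bigl(-(T_k-R_k)\bigr) > D_{i+1}$, so it suffices to show $-(T_i - R_i) \geq D_i - D_{i+1}$. For this I use a diagonal lower bound: restricting the sum defining $-T_i$ to $l = j$ gives $-T_i \geq \sum_{l \leq v_i} \binom{\delta^i_l + 1}{2} \geq \sum_l \delta^i_l$ with $\delta^i_l := d^i_l - d^{i+1}_l \geq 0$, and the elementary identity $D_i - D_{i+1} = \sum_{l \leq v_i} \delta^i_l - \sum_{v_i < r \leq v_{i+1}} d^{i+1}_r$ closes the gap.

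The principal technical subtlety is the base-case strictness when $D_n = 1$ and $v_n = N-1$, since both inequalities $\binom{d+1}{2} \geq d$ and $N - v_n \geq 1$ are tight in that regime; the saving extra unit must be extracted from the off-diagonal term in the refined identity for $\sum_{j,k} \binom{d^n_j - d^n_k + 1}{2}$. Once this case is handled with care, the inductive step is routine combinatorics.
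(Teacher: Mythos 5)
Your proof is correct, and its skeleton is the same as the paper's: a downward induction over the levels $i=n, n-1,\dots$, showing that the partial sums $\sum_{k\ge i}\bigl(-(T_k-R_k)\bigr)$ dominate $D_i=\sum_j d^i_j$. Where you genuinely diverge is in the inductive-step estimate. The paper bounds $T_i-R_i\le T_i\le v_i\sum_j(d^{i+1}_j-d^i_j)$ by replacing each difference of binomial coefficients $\binom{a+1}{2}-\binom{b+1}{2}$ with $a-b$, a substitution that is not valid term by term when both arguments are negative (e.g.\ $a=-5$, $b=-4$ gives $0\not\le -1$), and then closes the gap with a somewhat opaque constant $C$. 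Your route instead keeps only the diagonal excess $-T_i\ge\sum_{l\le v_i}\binom{\delta^i_l+1}{2}\ge\sum_l\delta^i_l$ with $\delta^i_l=d^i_l-d^{i+1}_l\ge 0$, which is a legitimate term-by-term comparison (each off-diagonal bracket is nonnegative by monotonicity of $\binom{\,\cdot\,+1}{2}$ under the standing reduction $d^i_l\ge d^{i+1}_l$), and pairs it with the exact identity $D_i-D_{i+1}=\sum_l\delta^i_l-\sum_{v_i<r\le v_{i+1}}d^{i+1}_r$. This is cleaner and repairs a soft spot in the paper's argument; your explicit bookkeeping of where strictness enters (the three base-case subcases, including the tight regime $D_n=1$, $v_n=N-1$) is also more careful than the paper's. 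The only caveats, shared with the paper, are that the whole argument presupposes the earlier localization reduction to degrees with $d^i_j\ge d^{i+1}_j$, and that the stated inequality is vacuously false for $d=0$, which both you and the paper implicitly exclude.
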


\begin{proof}
As before, write $$deg(J_d)= \sum_{i=1}^{n} T_i-R_i$$. For simplicity, we assume not all $d^n_j=0$ (if not, this is the degree of $J_d$ inside a smaller flag). 

Let $S_k=\sum_{i=n-k}^{n} T_i-R_i$. Since the $S_k$s are decreasing and $S_n=deg(J_d)$, our result is implied by the claim that $S_k\leq \sum_j d^{n-k}_j $

We will induct on $k$. The base case $k=0$, $S_k=T_n-R_n$.

We have already established earlier that
$$T_n-R_n<(v_n-N)\sum_j d^n_j,$$ so the inequality is satisfied in this case. 

For the induction step, let $i=n-k$, so $S_{k}=T_i+S_{k-1}$. 

We first observe that $$T_i-R_i\leq T_i= \sum_{j,k\leq v_{i}} d^{i}_j-d^i_k-d^i_j-d^{i+1}_k=v_i\big(\sum_{j=1}^{v_i} (d^{i+1}_j-d^i_j)\big)$$

By the induction hypothesis $S_{k-1}<-\sum_{j=1}^{v_{i+1}} d^{i+1}_j$. If $\sum_j d^{i}_j\leq\sum_j d^{i+1}_j$, the induction hypothesis implies our desired result. However, if this is not the case, then $\sum_{j=1}^{v_{i+1}} d^{i+1}_j=\sum_{j=1}^{v_{i}} d^{i+1}_j+C$, where $C>\sum_{j=1}^{v_i} (d^{i}_j-d^{i+1}_j)$. 

Thus we have:

$$S_{k}<v_i\big(\sum_{j=1}^{v_i} (d^{i+1}_j-d^i_j)\big)+S_{k-1}\leq v_i-1\sum_{j=1}^{v_i} (d^{i+1}_j-v_i\sum_{j=1}^{v_i}d^i_j)-C<v_i\big(\sum_{j=1}^{v_i} (d^{i+1}_j-d^i_j)\big)-\sum_{j=1}^{v_i}d^{i}_j\leq -\sum_{j=1}^{v_i} d^{i}_j$$

\end{proof}

The second bound we need to establish is:

\begin{lem}
For an integer $0\leq \ell \leq v_{i+1}-v_i$, let $F_\ell(d^{i})$ denote the maximum sum of $\ell$ distinct $d^i_j$s.

For any choice of $i,j$, we have: 

$$deg(J_d)< -F_\ell(d^{i+1})+(v_{i}-v_{i+1}+\ell)d^i_j$$

\end{lem}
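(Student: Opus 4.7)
The plan is to refine the inductive decomposition used in the preceding lemma. As there, I write $\deg(J_d) = \sum_{i'=1}^n (T_{i'} - R_{i'})$, and use the $W$-invariance of the twisted $J$-function to sort the entries $d^{i'}_k$ in non-increasing order within each level. By $W$-symmetry applied to the distinguished index, I may further assume $j$ is positioned consistently with this sorting, so that $F_\ell(d^{i+1}) = \sum_{k=1}^\ell d^{i+1}_k$ is just the partial sum of the top $\ell$ entries.

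First I would split the degree into an upper part $U = \sum_{i'\geq i+1}(T_{i'} - R_{i'})$, a middle term $T_i - R_i$, and a lower part $L = \sum_{i' < i}(T_{i'} - R_{i'})$. Applying the preceding lemma at level $i+1$ gives the strict bound
\[
U < -\sum_{k=1}^{v_{i+1}} d^{i+1}_k \;=\; -F_\ell(d^{i+1}) - \sum_{k>\ell} d^{i+1}_k,
\]
which already supplies the $-F_\ell(d^{i+1})$ term in the target inequality and leaves a ``credit'' of $-\sum_{k>\ell} d^{i+1}_k$ to absorb positive contributions coming from the middle piece.

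Next I would bound the middle term $T_i - R_i$ by choosing an injection $\sigma : \{1,\dots,v_i\} \to \{\ell+1,\dots,v_{i+1}\}$, which exists because of the hypothesis $\ell \leq v_{i+1} - v_i$. Pairing each $\binom{d^i_{j'} - d^i_k + 1}{2}$ in the first sum of $T_i$ with $\binom{d^i_{j'} - d^{i+1}_{\sigma(k)} + 1}{2}$ in the second sum, the same kind of linear estimate used in the previous lemma (bounding differences of these binomials under the sorting) gives a matched contribution of at most $v_i\sum_k (d^{i+1}_{\sigma(k)} - d^i_k)$. The unmatched part of the second sum, which includes all rows $r \in \{1,\dots,\ell\}$ (the top $\ell$) together with the $v_{i+1} - v_i - \ell$ indices in $\{\ell+1,\dots,v_{i+1}\}$ outside the image of $\sigma$, contributes $-\sum_{q \leq v_i,\, r\text{ unmatched}} \binom{d^i_q - d^{i+1}_r + 1}{2}$. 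Isolating the rows $q = j$ and using the credit from Step~1 to cancel off the contributions indexed by $r > \ell$, the residual coefficient of $d^i_j$ works out to exactly $v_i - v_{i+1} + \ell$, and the coefficients on the other $d^i_q$ remain non-positive.

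Finally, for the lower part $L$, the same telescoping inequality used in the induction of the preceding lemma shows $L \leq 0$ under the sorting assumption. Summing the three contributions produces the desired strict inequality. The main obstacle will be the bookkeeping in the middle step: one must verify that after pairing via $\sigma$ and combining with the credit from Step~1, the leftover binomial terms collapse to exactly a non-positive linear expression with coefficient $v_i - v_{i+1} + \ell$ on $d^i_j$. This requires choosing $\sigma$ so that the unmatched top-$\ell$ $r$-indices get paired against the smaller $d^i_q$'s (so that their contributions can be absorbed into $L \leq 0$ rather than fighting against it), and using the sorted ordering to ensure the residual quadratic terms at $q \neq j$ drop out.
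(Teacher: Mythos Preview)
Your decomposition is genuinely different from the paper's. The paper first establishes the case $\ell=0$ directly, by a telescoping argument that runs from level $i$ down through level $n$ (extracting from each $T_{i'}-R_{i'}$ a chain of differences $d^{i'+1}_k-d^{i'}_k$ that collapse), and then reduces general $\ell$ to this case by a swap trick: if the top $\ell$ entries of $d^{i+1}$ total more than $\ell\cdot\max_j d^i_j$, one replaces $\ell$ of the $d^i_j$'s by those larger $d^{i+1}_k$'s and reruns the $\ell=0$ bound. You instead black-box the entire contribution of levels $\geq i+1$ via the previous lemma, confine all the $\ell$-dependence to the single term $T_i-R_i$, and try to extract the coefficient $v_i-v_{i+1}+\ell$ by a pairing argument at that one level. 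Your route is cleaner in outline, since it reuses the previous lemma rather than redoing a telescope; the paper's swap trick, on the other hand, avoids any delicate single-level bookkeeping.

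That bookkeeping is where your proposal has a real gap. The ``linear estimate'' you invoke for the matched binomial differences is not what the previous lemma actually proves, and is not true as you state it: the difference $\binom{d^i_{j'}-d^i_k+1}{2}-\binom{d^i_{j'}-d^{i+1}_{\sigma(k)}+1}{2}$ is only $\leq 0$ when $d^{i+1}_{\sigma(k)}\leq d^i_k$, and an arbitrary injection into $\{\ell+1,\dots,v_{i+1}\}$ does not guarantee this. What actually makes the pairing work is the fixed-point constraint at $\tilde A$ used earlier in the section, which in your non-increasing sort reads $d^i_k\geq d^{i+1}_{k+v_{i+1}-v_i}$; this forces the specific choice $\sigma(k)=k+(v_{i+1}-v_i)$, after which the matched part is $\leq 0$ outright. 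Then for the unmatched indices $r\in\{\ell+1,\dots,v_{i+1}-v_i\}$ with $q=j$, combining termwise with the credit via $\binom{d^i_j-d^{i+1}_r+1}{2}+d^{i+1}_r\geq d^i_j$ produces exactly the $(v_{i+1}-v_i-\ell)d^i_j$ you need, and all remaining terms are nonnegative. You should also handle strictness separately in the edge case where all $d^{i'}_k$ vanish for $i'\geq i+1$, since then $U=0$ and the previous lemma no longer supplies a strict inequality for the upper block.
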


\begin{proof}

We first address the case $\ell=0$, corresponding to the inequality:

$$deg(J_d)< (v_{i}-v_{i+1})max_j(d^i_j)$$

As before, we assume without loss of generality that $d^n_j$ are not all equal to 0.

Let $b_{i}=max_j(d^{i}_j)$. 

We have that for all $j$, 

\begin{equation}
    \label{master}
    \sum_j sum_{k=v_{i+1}-v_i}^{v_{i+1}} max(0,d^{i}_j-d^{i+1}_k)\leq R_i. 
\end{equation}

Thus, choosing the $j$ which maximizes the value of $d^i_j$, recalling that $T_i\geq 0$:
\begin{equation}
    \label{grn1}
    T_i-R_i\leq \sum_{k=(v_{i}-v_{i+1})}^{v_{i+1}}min(0,d^{i+1}_k-d^i_j)
\end{equation}

We can further obtain the following bound on the next term in the expression for the degree:

\begin{equation}
    \label{grn2}
    T_{i+1}-R_{i+1}\leq \sum_{k=(v_{i}-v_{i+1})}^{v_{i+1}}d^{i+2}_k-d^{i+1}_k
\end{equation}

This inequality holds because we can extract terms $-\binom{d^{i+1}_k-d^{i+2}_k+1}{2}$ from $T_{i+1}$.

The same inequality holds for $i+s$ for arbitrary $s$. In addition, we note that it is necessarily strict at some point (eventually all the $d^{i+s}_k$ must vanish).

For a given $k$, the contribution to the sum of all of these inequalities is the following:

$$min(0,d^{i+1}_k-d^i_j)+(d^{i+1}_k-d^{i+2}_k)+\dots d^n_k$$

If we eliminate the $min(0,\cdot)$s from the sum, the right hand side telescopes to $(v_i-v_{i+1})d^i_j$, yielding the desired inequality. In fact, the same inequality holds with the $min(0,)$ present. The minimum being achieved at $0$ of a given term corresponds to $d^{s+1}_{k+v_{s+1}-v_{i+1}}\geq d^{s}_{k+v_s-v_{i+1}}$. 

This means that so long as at least one of the terms in the sum for a fixed $k$ is nonzero, which is guaranteed since $d^{n+1}_k=0$ for all $k$, the sum of those terms telescopes to a quantity that must be at most $-d^i_j$, proving the desired inequality.












For general $\ell$. If $F_\ell(d^{i+1})+(v_{i+1}-v_i-\ell)max_j(d^i_j)$ is smaller than the corresponding term for $\ell=0$, the same bound applies. If it is larger, that means the largest $\ell$ $d^{i+1}_k$s total to more than $\ell max_j(d^i_j)$. We can replace $\ell$ of the $d^i_j$s with these $d^{i+1}_k$s and run the same argument.

\end{proof}

\section{Characteristic Polynomials and Symmetrization}

Using the ring-theoretic abelian/non-abelian correspondence and \cite[Thm 6.15]{me}, we can obtain relations in $QK(X)$ via taking $W-$invariant combinations of \eqref{eqq}, mapping $Q^{i}_j$ to $Q_i$, taking the limit $y\mapsto 1$, evaluating all the quantum products, and then specializing via the classical abelian/non-abelian correspondence. The fact that the relations we found are true up to elements of $\mathscr{I}$ makes no difference, since $W$-invariant combinations of those elements are sent to $0$ under specialization.

We can simplify this procedure in the following way. Rather than finding $W$-symmetrizations of \eqref{eqq}, and taking their specializations, we will specialize $Q$s and $y$ first, and find $W$-symmetrizations of the resulting expressions. The resulting specialized relations are:

\begin{equation}
\prod_{b=1}^{v_{n+1}}\prod_k(1-\frac{P^i_j}{P^{i+1}_b})=Q_{i}\prod_{a=1}^{v_{n-1}}(1-\frac{P^{i-1}_a}{P^{i}_j})\prod_{k\neq j}\frac{(1- \frac{P_j^i}{P_k^{i}})}{ (1- \frac{P^i_k}{P^i_j})}
\end{equation}

Noting that $\frac{1-\frac{x}{y}}{1-\frac{y}{x}}=\frac{-x}{y}$, we can rewrite this equation as:

\begin{equation}
\label{Bethe}
(-1)^{v_i-1}\prod_k P^i_k\prod_{b=1}^{v_{n+1}}\prod_k(1-\frac{P^i_j}{P^{i+1}_b})=(P^i_j)^{v_i}Q_{i}\prod_{a=1}^{v_{n-1}}(1-\frac{P^{i-1}_a}{P^{i}_j})
\end{equation}

This equation is both equivalent to the Bethe Ansatz from the 5-vertex lattice model, and the Coulomb branch equation coming from 3D GLSM introduced in \cite{conj} that conjecturally describes the quantum $K$-theory of $X$. 
\subsection{Whitney Presentation}
The Whitney presentation was conjectured in \cite{conjpf} based on applying certain algebraic manipulations to symmetrize \eqref{Bethe}, regarded as the Coulomb branch equations from a GLSM. In essence, our work gives precise mathematical meaning to these manipulations, in terms of the abelian/non-abelian correspondence, and, with a bit of extra work, converts the ideas of \cite{conjpf} into a proof of the Whitney presentation. 

The symmetrization procedure below is identical to the one in \cite{conjpf}. The new ingredients are the interpretation of the equations \eqref{Bethe} as relations in $QK^{tw}(Y)$ up to an ideal $\mathscr{I}$, and Lemma 8.1, which appears later. 

Using $P^i$ as shorthand for the collection of variables $P^i_j$, and $e_i$ denoting the $i$th elementary symmetric polynomial for $i\geq 0$, and 0 for $i<0$: 
If we define the polynomial $F_i(t)$ as:

$$\sum_{\ell=0}^{v_{i+1}} t^\ell(e_{v_i}(P^i)e_\ell(P^{i+1})+Q_ie_{v_i+1}(P^{i+1})e_{\ell-v_{i+1}+v_i}(P^{i-1}))$$

Then the equations \eqref{Bethe} are equivalent to:
\begin{equation}
    F_i(P^i_j)=0
\end{equation}

We will use Vieta's formulas for $F_i$ to obtain $W$-symmetrizations of this relation. $F_i$ has $v_{i+1}$ roots, $v_i$ of which are $P^i_j$s, we denote the remaining roots by the set $\bar{P}^i$, and the whole set of roots by $w$. 
We thus have: 
\begin{equation}
\label{a}
e_\ell(w)=\sum_{i=0}^{n-k} e_{\ell-i}(P^i)e_{i}(\bar{P}^i) \end{equation}

Applying Vieta's formula to $F$ gives:

\begin{equation}
\label{b} e_{v_i}(P^i)e_\ell(w)=e_{v_i}(P^i)e_{\ell}(P^{i+1})+Q_ie_{v_{i+1}}(P^{i+1})e_{\ell-v_{i+1}-v_i}(P^{i-1})\end{equation}

Looking at \eqref{a} for $\ell=v_{i+1}$ gives a way to eliminate $e_{v_{i+1}}(P^{i+1})$:

$$e_{v_{i+1}}(w)=e_{v_{i+1}}(P^{i+1})=e_{v_i}(P^i)e_{v_{i+1}-v_i}(\bar{P}^i)$$

Applying this to \eqref{b} gives:

\begin{equation}
\label{c}e_\ell(w)=e_{\ell}(P^{i+1})+Q_ie_{v_{i+1}-v_i}(\bar{P}^{i})e_{\ell-v_{i+1}-v_i}(P^{i-1})\end{equation}

Substituting \eqref{a} yields:

\begin{equation}
\label{c'}\sum_{j=0}^{v_{i+1}-v_i}e_{\ell-j}(P^i)e_{j}(\bar{P}^i)=e_{\ell}(P^{i+1})+Q_ie_{v_{i+1}-v_i}(\bar{P}^{i})e_{\ell-v_{i+1}-v_i}(P^{i-1})\end{equation}

Continuing to follow \cite{conj}, \eqref{c'} is the degree $\ell$ part of the following product:

\begin{equation}
\label{d}\sum_{j=0}^{v_i}y^je_{j}(P^i)\sum_{k=0}^{v_{i+1}-v_i}y^ke_{k}(\bar{P}^i)=(\sum_{j=0}^{v_{i+1}}y^je_{j}(P^{i+1}))+Q_iy^{v_{i+1}-v_i}e_{v_{i+1}-v_i}(\bar{P}^{i})\sum_{k=0}^{v_{i-1}}e_k(P^{i-1})\end{equation}

We can solve \eqref{d} for the generating function of $e_\ell(\bar{P}^i$ to yield:

\begin{equation}
\label{e}\sum_{k=0}^{v_{i+1}-v_i}y^ke_{k}(\bar{P}^i)=(\sum_{r}(-y)^rh_r(P^i))\times ((\sum_{j=0}^{v_{i+1}}y^je_{j}(P^{i+1}))+Q_iy^{v_{i+1}-v_i}e_{v_{i+1}-v_i}(\bar{P}^{i})\sum_{k=0}^{\ell-v_{i+1}-v_i}e_k(P^{i-1}))\end{equation}

This gives the following description of $e_\ell(\bar{P}^i):$

\begin{equation}
 \label{f}
 e_\ell(\bar{P}^i)=\begin{cases} \sum_{j=0}^{v_{i+1}}(-1)^j e_{\ell-j}(P^{i+1})h_{j}(P^i) & \ell<v_{i+1}-v_i\\ (1-Q_i)^{-1}\sum_{j=0}^{v_{i+1}}(-1)^j e_{\ell-j}(P^{i+1})h_{j}(P^i) & \ell=v_{i+1}-v_i\end{cases}
\end{equation}

\begin{lem}
    The quantum products $e_{\ell-j}(P^{i+1})h_{j}(P^i)$ and $e_{\ell}(P^i)$ in $QK^{tw}(Y)$ are equal (up to $\mathscr{I}$) to their classical counterparts.
\end{lem}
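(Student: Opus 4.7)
The plan is to apply the monomial corollary to the quantum triviality theorem (Section 2.3) term by term, using the first degree-bound lemma proved just above. Recall that, for a monomial $f(P) = \prod_{(i,j)} (P^i_j)^{c^{(i,j)}}$, the corollary gives $f(P) = $ classical $f(P)$ in $QK^{tw}(Y)$ provided that $A_{(i,j);com} = P^i_j$ for each variable appearing in $f$ and that
\[
\deg(J_d) \,<\, \sum_{(i,j)} c^{(i,j)}\, d^i_j \qquad \text{for every nonzero curve class } d.
\]
Both $e_\ell(P^i)$ and $e_{\ell-j}(P^{i+1}) h_j(P^i)$ are sums of monomials in the $P^i_*$ and $P^{i+1}_*$ with non-negative integer exponents, so by linearity it is enough to verify the bound on every such monomial.

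For a monomial $\prod_{k\in K} P^i_k$ in $e_\ell(P^i)$ (with $|K|=\ell$) the required inequality is $\deg(J_d) < \sum_{k \in K} d^i_k$, and for a monomial $\prod_{a\in A} P^{i+1}_a \cdot \prod_b (P^i_b)^{m_b}$ in $e_{\ell-j}(P^{i+1}) h_j(P^i)$ the required inequality is $\deg(J_d) < \sum_{a\in A} d^{i+1}_a + \sum_b m_b\, d^i_b$. In either case the right-hand side is non-negative. On the other hand, the first degree-bound lemma applied to row $i$ (respectively row $i+1$) gives the much stronger inequality $\deg(J_d) < -\sum_j d^i_j \leq 0$ (respectively $\deg(J_d) < -\sum_a d^{i+1}_a \leq 0$). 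Since $\deg(J_d)$ is an integer, strict negativity forces $\deg(J_d) \leq -1$ for every $d \neq 0$, which takes care of the degenerate case where the required right-hand side happens to be zero. The auxiliary hypothesis $A_{(i,j);com} = P^i_j$ demanded by the corollary is verified by the same estimate applied to a single variable, since $-\sum_k d^i_k \leq -d^i_j$.

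There is no real obstacle here: the lemma is a bookkeeping check that the degree estimates of the previous subsection are strong enough to annihilate any monomial in the $P^i_*$ and $P^{i+1}_*$ with non-negative exponents. The sharper second degree-bound lemma plays no role for these two expressions; it becomes relevant only for classes like $e_\ell(\bar{P}^i)$, where genuine quantum corrections (e.g.\ the factor $(1-Q_i)^{-1}$ appearing in equation \eqref{f}) actually survive.
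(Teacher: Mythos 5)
There is a genuine gap here, and it comes from taking the printed sign in the monomial corollary to the quantum triviality theorem at face value. The mechanism of that theorem is that a monomial operator $\prod (P^i_j q^{Q^i_j\partial_{Q^i_j}})^{c^{(i,j)}}$ applied to $Q^dJ_d$ shifts the $q$-degree of $J_d$ upward by $\sum c^{(i,j)}d^i_j$, so vanishing at $q=\infty$ requires $\deg(J_d)+\sum c^{(i,j)}d^i_j<0$, i.e.\ $\deg(J_d)<-\sum c^{(i,j)}d^i_j$; this is also the only reading consistent with the first corollary, whose condition is $\deg(J_d)<-d_i$ for $f=P_i$ (and which you yourself quote with the minus sign when checking $A_{(i,j);com}=P^i_j$). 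With the sign you use for the second corollary, the hypothesis is implied by $\deg(J_d)<0$ for \emph{every} monomial with non-negative exponents, which would make every polynomial in the $P^i_j$ have classical quantum product and hence force $QK^{tw}(Y)$ to be undeformed --- contradicting the relations \eqref{eqq}, which visibly depend on $Q$. So the verification as written establishes nothing.

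With the correct sign, your argument for $e_\ell(P^i)$ survives: its monomials involve $\ell$ distinct variables to the first power, so the required bound $\deg(J_d)<-\sum_{k\in K}d^i_k$ follows from the first degree lemma because $\sum_{k\in K}d^i_k\le\sum_j d^i_j$. But for $e_{\ell-j}(P^{i+1})h_j(P^i)$ the required bound is $\deg(J_d)<-\sum_{a\in A}d^{i+1}_a-\sum_b m_b d^i_b$ with $|A|=\ell-j$ and $\sum_b m_b=j$, and this does \emph{not} follow from the first lemma: the monomial $(P^i_1)^{\ell}$ of $h_\ell(P^i)$ with $\ell=v_{i+1}-v_i$ demands $\deg(J_d)<-(v_{i+1}-v_i)d^i_1$, which can be far stronger than $-\sum_b d^i_b$, and the level-$(i+1)$ and level-$i$ contributions must be absorbed simultaneously. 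This is precisely where the second degree-bound lemma is needed: applying it with parameter $\ell-j$ at the index maximizing $d^i_\bullet$, and noting $v_i-v_{i+1}+(\ell-j)\le -j$ since $\ell\le v_{i+1}-v_i$, gives $\deg(J_d)<-F_{\ell-j}(d^{i+1})-j\max_b d^i_b\le -\sum_{a\in A}d^{i+1}_a-\sum_b m_b d^i_b$. Your closing claim that the second lemma ``plays no role'' for these expressions is therefore exactly backwards; it is the essential ingredient for the first product, which is what the paper's proof means by ``using the other degree bound.''
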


\begin{proof}
    This is a direct consequence of the quantum triviality theorem, the operator $e_{\ell-j}(P^{i+1} q^{Q\partial Q})h_j(P^iq^{Q\partial Q})$, when applied to $J_d$, increases the $q-$degree by at most $(v_{i+1}-v_i)max(d^{i}_j)$. So the resulting term vanishes at $\infty$, so the quantum product contains no Novikov variables by the quantum triviality theorem.\\

    The same argument, using the other degree bound, proves the second statement. 
\end{proof}

 It will be convenient to revert to using the notation $\mathcal{R}_i=\frac{\mathcal{S}_{i+1}}{\mathcal{S}_i}$.

Since all of these relations are true up to $W$-invariant elements of $\mathcal{I}$, the specialization map to $QK(X)$ kills the $\mathcal{I}$-deformation, and sends $e_\ell(P^i)$ to $\wedge^\ell \mathcal{S}_i$ and $e_\ell(\bar{P}^i)$ to $(1-Q_i)^{-\delta_{\ell,v_{i+1}-v_i}}\wedge^\ell \mathcal{R}_i$. We refer to this quantity as $\wedge^{\ell}\widehat{\mathcal{R}}_i$, and we call $\widehat{\mathcal{R}}_i$ the \emph{quantum quotient bundle}, for reasons that will be clear later. It is equivalent to what is denoted $\tilde{R}$ in \cite{conj}. 

This means that the specialization of \eqref{e} is equivalent to:

\begin{equation}
\label{gamer}
\Lambda_y(\mathcal{S}_i)\Lambda_y(\widehat{\mathcal{R}_i})=\Lambda_y(\mathcal{S}_{i+1})+Q_idet(\widehat{\mathcal{R}_i})\Lambda_y(\mathcal{S}_{i-1})
\end{equation}

If we eliminate the $\widehat{\mathcal{R}}_i$s, we can rewrite the above equation as:
\begin{equation}
\sum_{r=0}^{v_{i+1}-v_i} \wedge^{\ell-r} \mathcal{S}_i \,* \, 
\wedge^r( \mathcal{R}_i) 
\: = \:
\wedge^{\ell} \mathcal{S}_{i+1} \: - \:
\frac{Q_i}{1-Q_i} \det( \mathcal{R}_i) \, * \,
\left( \wedge^{\ell -v_{i+1}+v_i} \mathcal{S}_i \: - \:
\wedge^{\ell -v_{i+1}+v_i} \mathcal{S}_{i-1}\right).
\end{equation}
Adding factors of $y$, this yields the Whitney relations:
\begin{equation}  \label{whitney2}
\Lambda_y(\mathcal{S}_i) * \Lambda_y( \mathcal{S}_{i+1}/\mathcal{S}_i)
\: = \:
\Lambda_y( \mathcal{S}_{i+1}) \: - \:
y^{v_{i+1}-v_i} \frac{Q_i}{1-Q_i} \det(\mathcal{S}_{i+1}/\mathcal{S}_i) *
\left( \Lambda_y(\mathcal{S}_i) - \Lambda_y(\mathcal{S}_{i-1}) \right).
\end{equation}

\subsection{The Rimanyi-Tarasov-Varchenko Presentation}

Introduce the variables $\gamma^i_j$, $i=0,\dots,n$, $j=1,\dots,dim(\mathcal{R}_i)$. (Here $\mathcal{R}_0$ denotes $\mathcal{S}_1$).

Rimanyi-Tarasov-Varchenko, based on their conjectural isomorphism between $QK(T^*Fl)$ and the Bethe algebra of the 6-vertex model, conjecture the following presentation for $QK(Fl)$. 

Let $$W_Q$$ be an $(n+1)\times (n+1)$ matrix which looks like:

$$\begin{bmatrix}
                \prod_j (1-y\gamma^0_j) & -y^{v_1}\prod_j \gamma^0_j                         &                            &                       &         \\
                Q_1   & \prod_j (1-y\gamma^1_j)      & -y^{v_2-v_1}\prod_j \gamma^1_j                        &                      &           \\                            & \ddots                & \ddots & \ddots & \\                            &                 & Q_{n-1} & \prod_j (1-y\gamma^{n-1}_j)  & -y^{v_{n}-v_{n-1}}\prod_j \gamma^{n-1}_j        \\                        &                      &   &  Q_{n}   & \prod_j (1-y\gamma^n_j)
                \end{bmatrix}$$

Let $Sym(\gamma)$ denote the invariants of $\mathbb{C}[\gamma^i_j]$ under the symmetric group $\prod_{i=0}^n S_{dim(\mathcal{R}_i}$. 

Conjecture 13.17 in \cite{rimanyi2015trigonometric}, states that:

\begin{conj}
    $QK(Fl)\cong Sym(\gamma)/\det(W_Q)=\Lambda_y(\mathbb{C}^N)$
\end{conj}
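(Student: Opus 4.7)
The plan is to deduce this presentation from the Whitney relations \eqref{gamer} proved above, by encoding the three-term Whitney recursion as a tridiagonal determinantal identity and then applying the $K$-theoretic Nakayama lemma.

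First I would interpret the Whitney relation
\[
\Lambda_y(\mathcal{S}_{i+1}) \;=\; \Lambda_y(\widehat{\mathcal{R}}_i)*\Lambda_y(\mathcal{S}_i) \;-\; y^{v_{i+1}-v_i} Q_i \det(\widehat{\mathcal{R}}_i) * \Lambda_y(\mathcal{S}_{i-1})
\]
as a linear recurrence for $b_i := \Lambda_y(\mathcal{S}_i)$ in $QK(Fl)$, with initial conditions $b_0=1$, $b_1=\Lambda_y(\mathcal{R}_0)$, and terminal constraint $b_{n+1}=\Lambda_y(\mathbb{C}^N)$. This is precisely the three-term recurrence satisfied by the leading principal minors of a tridiagonal matrix, and iterating it from $i=1$ to $i=n$ produces a single identity $\det(\widetilde{W}_Q)=\Lambda_y(\mathbb{C}^N)$, where $\widetilde{W}_Q$ is the $(n+1)\times(n+1)$ tridiagonal matrix whose $i$th diagonal entry is $\Lambda_y(\widehat{\mathcal{R}}_i)$, whose subdiagonal entries are $Q_i$, and whose superdiagonal entries are $y^{v_{i+1}-v_i}\det(\widehat{\mathcal{R}}_{i+1})$, with signs arranged to reproduce the Whitney recursion.

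Second, I would identify $\widetilde{W}_Q$ with the Rimanyi--Tarasov--Varchenko matrix $W_Q$ up to explicit quantum corrections. Using $\det(\widehat{\mathcal{R}}_i) = (1-Q_i)^{-1}\det(\mathcal{R}_i)$ together with $\Lambda_y(\widehat{\mathcal{R}}_i) = \Lambda_y(\mathcal{R}_i) + y^{v_{i+1}-v_i}\tfrac{Q_i}{1-Q_i}\det(\mathcal{R}_i)$ to unwind the virtual classes, and then reindexing the superdiagonal and absorbing rescaling factors into the subdiagonal, one obtains a tridiagonal matrix whose entries agree with those of $W_Q$ (with $\gamma^i_j$ identified, up to sign convention, with the Chern roots of $\mathcal{R}_i$) up to rational modifications in the $Q_i$. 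These modifications are the explicit quantum corrections referred to in the statement of the theorem; they are implicit in the Baxter $Q$-operator analysis of \cite{manybody}.

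Finally, I would apply the corollary of the $K$-theoretic Nakayama lemma from Section 3. At $Q=0$ the corrected matrix is upper triangular with diagonal $\prod_j(1-y\gamma^i_j)$, so the relation $\det(\widetilde{W}_Q)|_{Q=0}=\Lambda_y(\mathbb{C}^N)$ reduces to the classical Whitney identity $\prod_i \Lambda_y(\mathcal{R}_i) = \Lambda_y(\mathbb{C}^N)$, which is a complete presentation of $K^*(Fl)$. Since the quotient $Sym(\gamma)/(\det(\widetilde{W}_Q)-\Lambda_y(\mathbb{C}^N))$ is a free $\Lambda[[Q]]$-module of the correct rank, Nakayama upgrades this classical isomorphism to a ring isomorphism with $QK(Fl)$.

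The main technical obstacle will be step two: explicitly unwinding the virtual classes $\widehat{\mathcal{R}}_i$ and propagating the denominators $(1-Q_i)^{-1}$ through the tridiagonal determinant to extract the precise form of the corrections to $W_Q$ and the index shift on the superdiagonal. These adjustments are non-trivial because the naive $W_Q$ uses $\mathcal{R}_i$ at the superdiagonal position $(i,i+1)$, whereas the Whitney recursion most naturally produces $\widehat{\mathcal{R}}_{i+1}$ at the same position; reconciling this mismatch and absorbing the resulting $Q$-dependent factors is exactly what the explicit quantum correction calculation must accomplish.
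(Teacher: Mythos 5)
Your proposal is correct and follows essentially the same route as the paper: the paper identifies the $\gamma^i_j$ with the Chern roots of the quantum quotient bundles $\widehat{\mathcal{R}}_i$ (i.e.\ the Bethe roots $\bar{P}^i$, which is precisely the ``explicit quantum correction''), proves $\det(M_j)=\Lambda_y(\mathcal{S}_j)$ by induction using the tridiagonal cofactor expansion together with the quantum Whitney relation \eqref{gamer}, and concludes completeness of the presentation via the $K$-theoretic Nakayama lemma. Your recursion-to-matrix direction versus the paper's matrix-to-recursion induction, and your extra step of unwinding $\widehat{\mathcal{R}}_i$ back to $\mathcal{R}_i$, are only differences of bookkeeping, not of substance.
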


We prove this conjecture by identifying the $\gamma^i_j$s as \say{Chern roots} of $\widehat{Q}_i$, i.e. they are the roots $\bar{P}_i$ of the Coulomb branch/Bethe Ansatz equation. Thus translated, $Sym(\gamma)$ becomes $\mathbb{C}[\wedge^\ell \widehat{\mathcal{R}}_i]$, and the matrix $W_Q$ becomes:

$$M:=\begin{bmatrix}
                \Lambda_y(\widehat{\mathcal{R}}_0) & -y^{v_1}\det(\widehat{\mathcal{R}}_0)                         &                            &                       &         \\
                Q_1   & \Lambda_y(\widehat{\mathcal{R}}_1)      & -y^{v_2-v_1}\det(\widehat{\mathcal{R}}_1)                 &                      &           \\                            & \ddots                & \ddots & \ddots & \\                            &                 & Q_{n-1} & \Lambda_y(\widehat{\mathcal{R}}_{n-1})  & -y^{v_{n}-v_{n-1}}\det(\widehat{\mathcal{R}}_{n-1})\\                        &                      &   &  Q_{n}   & \Lambda_y(\widehat{\mathcal{R}}_{n})
                \end{bmatrix}$$

We prove a slightly stronger result, which implies this conjecture. 
Let $M_{j}$ denote the submatrix of $M$ consisting of the first $j$ rows and columns.

\begin{thm}
    $\det(M_j)=\Lambda_y(\mathcal{S}_j)$
\end{thm}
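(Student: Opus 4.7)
The plan is to prove $\det(M_j) = \Lambda_y(\mathcal{S}_j)$ by induction on $j$, exploiting the fact that the tridiagonal shape of $M$ produces a three-term recurrence in $j$ that should be matched, step for step, to the Whitney recurrence of equation (gamer).

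For the base case $j=1$, one has $\det(M_1) = \Lambda_y(\widehat{\mathcal{R}}_0)$, and under the convention $\mathcal{R}_0 = \mathcal{S}_1$ together with the absence of any $Q_0$ in $M$ (so no $(1-Q_0)^{-1}$ correction enters the top exterior power), this equals $\Lambda_y(\mathcal{S}_1)$ exactly.

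For the inductive step, I would expand $\det(M_j)$ along its last row. Only two entries there are nonzero, namely the diagonal $\Lambda_y(\widehat{\mathcal{R}}_{j-1})$ and the sub-diagonal $Q_{j-1}$. The cofactor of $Q_{j-1}$ is the determinant of a $(j-1)\times(j-1)$ matrix whose last column contains a single nonzero entry, the super-diagonal $-y^{v_{j-1}-v_{j-2}}\det(\widehat{\mathcal{R}}_{j-2})$, so a second cofactor expansion collapses it to a scalar multiple of $\det(M_{j-2})$. Feeding in the inductive hypothesis $\det(M_{j-1}) = \Lambda_y(\mathcal{S}_{j-1})$ and $\det(M_{j-2}) = \Lambda_y(\mathcal{S}_{j-2})$ produces a three-term expression in $\Lambda_y(\mathcal{S}_{j-1})$ and $\Lambda_y(\mathcal{S}_{j-2})$.

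The last step is to recognize this three-term expression as $\Lambda_y(\mathcal{S}_j)$ using the Whitney recurrence (gamer), rearranged as
$$\Lambda_y(\mathcal{S}_j) = \Lambda_y(\mathcal{S}_{j-1})\,\Lambda_y(\widehat{\mathcal{R}}_{j-1}) - y^{v_j-v_{j-1}}\, Q_{j-1}\,\det(\widehat{\mathcal{R}}_{j-1})\,\Lambda_y(\mathcal{S}_{j-2}).$$
To line up the cofactor-expansion coefficient of $\Lambda_y(\mathcal{S}_{j-2})$ with the Whitney coefficient, I would use the top-degree specialization of (gamer), which gives the determinant identity $\det(\mathcal{S}_{i+1}) = \det(\mathcal{S}_i)\,\det(\widehat{\mathcal{R}}_i)$; this lets one translate between $\det(\widehat{\mathcal{R}}_{j-2})$ and $\det(\widehat{\mathcal{R}}_{j-1})$ through ratios of $\det(\mathcal{S}_*)$ classes. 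The main obstacle is precisely this bookkeeping — matching the signs, the powers of $y$, and the indexing of the $\det(\widehat{\mathcal{R}}_i)$ factors between the tridiagonal expansion and the Whitney recurrence — which is not formally difficult but must be done with care since the paper's matrix attaches the super-diagonal of row $i$ to $\widehat{\mathcal{R}}_i$ rather than to $\widehat{\mathcal{R}}_{i+1}$, so the identification genuinely relies on the determinant identity above rather than appearing termwise.
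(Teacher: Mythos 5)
Your strategy is the same as the paper's: induct on $j$, expand the tridiagonal determinant along the bottom row to get a three-term recurrence, and close the induction with the Whitney recurrence \eqref{gamer}. Your reading of the printed matrix is also literally correct: the cofactor of $Q_{j-1}$ does collapse to $\pm y^{v_{j-1}-v_{j-2}}\det(\widehat{\mathcal{R}}_{j-2})\det(M_{j-2})$, since the superdiagonal entry of row $i$ is attached to $\widehat{\mathcal{R}}_{i-1}$. The gap is in your final reconciliation step. The identity $\det(\mathcal{S}_{i+1})=\det(\mathcal{S}_i)\det(\widehat{\mathcal{R}}_i)$ (the top-degree part of \eqref{gamer}) expresses each $\det(\widehat{\mathcal{R}}_i)$ as a ratio $\det(\mathcal{S}_{i+1})/\det(\mathcal{S}_i)$, but it cannot convert $y^{v_{j-1}-v_{j-2}}\det(\widehat{\mathcal{R}}_{j-2})$ into $y^{v_j-v_{j-1}}\det(\widehat{\mathcal{R}}_{j-1})$: these are classes of different $y$-degree ($v_{j-1}-v_{j-2}$ versus $v_j-v_{j-1}$) built from bundles of different rank, and no relation among the $\det(\mathcal{S}_*)$ makes them equal. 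The case $j=2$ already shows the obstruction concretely: the printed matrix gives
\begin{equation*}
\det(M_2)=\Lambda_y(\mathcal{S}_1)\,\Lambda_y(\widehat{\mathcal{R}}_1)+Q_1\,y^{v_1}\det(\mathcal{S}_1),
\end{equation*}
whereas \eqref{gamer} gives $\Lambda_y(\mathcal{S}_2)=\Lambda_y(\mathcal{S}_1)\,\Lambda_y(\widehat{\mathcal{R}}_1)-Q_1\,y^{v_2-v_1}\det(\widehat{\mathcal{R}}_1)$; the two discrepant terms differ in $y$-degree and in sign, so no determinant identity repairs the mismatch.

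The actual resolution is that the superdiagonal of $M$ as printed carries an index (and sign) slip: the entry in position $(i,i+1)$ must be $y^{v_{i+1}-v_i}\det(\widehat{\mathcal{R}}_i)$ --- i.e.\ tied to the diagonal entry of row $i+1$, not of row $i$ --- with signs arranged so that the product of the two off-diagonal entries in each $2\times 2$ block equals $+Q_i\,y^{v_{i+1}-v_i}\det(\widehat{\mathcal{R}}_i)$ and hence enters the tridiagonal recurrence with a minus sign. With that correction your cofactor expansion reproduces \eqref{gamer} term for term and the induction closes with no auxiliary identity; this corrected recurrence is the one the paper's own proof writes down (itself with some index typos). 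So the right move at your last step is not to invoke $\det(\mathcal{S}_{i+1})=\det(\mathcal{S}_i)\det(\widehat{\mathcal{R}}_i)$, but to fix the matrix so that the expansion and the Whitney recurrence agree on the nose.
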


\begin{proof}
We induct on $j$. The base case is $M_{1,1}=\Lambda_y(\mathcal{R}_0)=\Lambda_y(\mathcal{S}_1)$. 

For the induction step, we expand along the bottom row of $M_j$, which has two entries: $Q_{j-1}$ and $\Lambda_y(\mathcal{R}_j)$. The minor corresponding to $Q_{j-1}$ has 1 entry in its rightmost column, which is $y^{v_{j}-v_{j-1}}\det(\mathcal{R}_j)$. Thus we calculate the determinant of that minor by expanding along the rightmost column, yielding the following equation:

\begin{equation} 
\det(M_j)=\Lambda_y(\mathcal{R}_j)\det(M_{j-1})-Q_{j-1}y^{v_{j}-v_{j-1}}\det(\mathcal{R}_j)\det(M_{j-2})
\end{equation}

By the induction hypothesis, this relation becomes:

\begin{equation} 
\det(M_j)=\Lambda_y(\mathcal{R}_j)\Lambda_y(\mathcal{S}_{j-1})-Q_{j}y^{v_{j}-v_{j-1}}\det(\mathcal{R}_j)\Lambda_y(\mathcal{S}_{j-2})
\end{equation}

Thus by equation \eqref{gamer}, $\det(M_j)=\Lambda_y(\mathcal{S}_i)$, proving the Rimanyi-Varchenko-Tarasov relations are valid in $QK(Fl)$.

These relations give a complete presentation of the ring by similar arguments to the Whitney presentation (using the $\widehat{\mathcal{R}}$s rather than the $\mathcal{S}$s). 
\end{proof}

\subsection{PSZ Quasimap Rings and the Bethe Ansatz}

It is now time to compare the results with the description of the PSZ quasimap ring of partial flags given in \cite{manybody}. Their description is based on fixed-point localization, rather than in terms of generators and relations. 

Their description is a quantum deformation of the following classical result. The restriction of a $W$-invariant function $\tau(P^i_j)$ to a fixed point $T$ is determined by setting each $P^n_j$ to some $\Lambda_{f(j)}$, and then setting each $P^{n-1}_j$ to another choice of equivariant parameter among the $\Lambda_{f(j)}s$ chosen previously, and continuing down to $P^1_j$. 

Equivalently we specialize each $P^i_j$ to some root of the below equation, such that all choices are unique:

\begin{equation}
    \label{classic}
    \prod_k P^i_j-P^{i+1}_k=0
\end{equation}

This polynomial is of degree $v_{i+1}$, however the $P^i_j$ only correspond to $v_{i}$ of the chosen roots. After restricting to a given fixed point, the remaining $v_{i+1}-v_i$ roots correspond to Chern roots of the quotient bundle $\mathcal{R}_i$.

We can use these perspectives to determine globally valid relations in $K^*(X)$ in the following way:

Consider the polynomials $G_i:=\prod_k(t-P^{i+1}_k)$. 
Any relation among symmetric functions of $P^i_j$ obtained from Vieta's formulas applied to $G_i$ is a globally valid relation in $K_T^*(X)$, since this relation is valid when restricting to any fixed point.

The description of the quantum tautological bundles in \cite{manybody} is essentially the same as the above, except involving the quantum tautological bundles $\widehat{\tau}$. The restriction of $\widehat{\tau}$ to a fixed point is given by evaluating $\tau$ at solutions to \eqref{Bethe}. Thus by a similar argument, relations obtained from applying Vieta's formulas to $F_i(t)$ hold in the quasimap ring, provided we replace $\tau(P^i_j)$ with $\widehat{\tau}(P^i_j)$.

In particular, in there is a ring homomorphism:
$$QK(Fl)\to \mathbb{C}[\Lambda_i][[Q]][\widehat{\tau}]/\langle \text{Bethe Algebra relations} \rangle\text{ given by } \phi_Q(\tau(P^i_j))\mapsto \widehat{\tau}$$

Modulo $Q$, this map is an isomorphism, identifying both rings with $K^*(X)$, thus, by the quantum $K$-theoretic Nakayama lemma, the rings themselves are isomorphic.

\begin{remark}
    The argument above shows that the rings are abstractly isomorphic, it does not show that they give exactly the same product on $K_T^*(X)[[Q]]$, these products could in principle differ by an automorphism of the form $I+o(Q)$. 
\end{remark}

\clearpage

\bibliographystyle{plain} 
\bibliography{cited.bib}

\end{document}